\documentclass[11pt]{article}
\date{}
\usepackage{amssymb}
\usepackage{amsmath}
\usepackage{amsthm}
\usepackage{graphicx}
\usepackage{indentfirst}
\allowdisplaybreaks[4]
\newtheorem{theorem}{Theorem}

\newtheorem{corollary}[theorem]{Corollary}

\newtheorem{definition}[theorem]{Definition}

\newtheorem{lemma}[theorem]{Lemma}

\newtheorem{remark}[theorem]{Remark}

\numberwithin{equation}{section}
\numberwithin{theorem}{section}
\newcommand{\keywords}[1]{\par\noindent\textbf{Keywords:} #1}
\newcommand{\subjclass}[2]{
  \par\noindent\textbf{Mathematics Subject Classification} #2}

\begin{document}

\title{On nonnegative solutions of the differential inequality $\Delta_pu+ \Delta_q u+V(x)u^s\leq 0$ on Riemannian manifolds }

\author{ Biqiang Zhao\footnote{
		E-mail addresses: 2306394354@pku.edu.cn } }
            
\maketitle
\setlength{\parindent}{2em}

\begin{abstract}
In this paper, we are concerned with differential inequalities
with $(p,q)$-Laplacian operator on Riemannian manifolds. Using a test function argument, we establish Liouville-type theorems under the manifold’s geometry  and the potential’s behavior at infinity.

\end{abstract}

\keywords{ Liouville theorem,  quasilinear inequality,  $(p,q)$-Laplacian}
\subjclass [{ 35J60, 35J92, 53C20, 53A55 }


\section{Introduction}
\label{1}
In recent years, the study of nonexistence results for nonnegative solutions to quasilinear elliptic equations or inequalities have many achievements, cf.\cite{AS,BBF,MMP1,MMP2,MMP3}. The purpose of the present paper is to study the Liouville type theorems for nonnegative solutions of the quasilinear elliptic inequalities
\begin{align*}
    \Delta_pu+ \Delta_q u+V(x)u^s\leq 0, \quad in \ M
\end{align*}
where $(M,g)$ is a complete, non-compact, Riemannian manifold with metric $g$ and $\Delta_z u=\mathrm{div}(|\nabla u|^{z-2}\nabla u),\ 1<z\in \{p,q\}$.
\par
Recalling that in the Euclidean space $\mathbb{R}^n$, due to the work of Gidas and Spruck \cite{GS}, one of the outstanding result is attached to the equation
 \begin{align}
     \Delta u+u^p=0,\quad \mathbb{R}^n,\quad  n>2,\quad p>1.
 \end{align}
 Gidas and Spruck proved that (1.1) admits no nontrivial non-negative solution if $1<p<\frac{n+2}{n-2}$. Moreover, (1.1) admits no nontrivial non-negative supersolution if $1<p<\frac{n}{n-2}$. In the case of $p-$Laplacian, Mitidieri and Pohozaev \cite{MP1} obtained the Liouville property for the following inequality
 \begin{align}
     \Delta_p u+u^s\leq 0,\quad in\ \mathbb{R}^n,
 \end{align}
 that is, (1.2) admits no nontrivial non-negative solution if
\begin{align*}
    1<s<\frac{(p-1)n}{n-p},\quad 1<p<n.
\end{align*}
For exterior domains in $ \mathbb{R}^n$, Bidaut-V$\acute{e}$ron and Pohozaev \cite{BMP} proved that the nonneagtive solution of (1.2) is $u\equiv 0$ if $ p<n$ and $ 1<s<\frac{(p-1)n}{n-p}$ or $ p=n$ and $1<s<\infty$. In \cite{MP2}, Mitidieri and Pohozaev investigated the inequality involving a gradient nonlinearity 
\begin{align*}
    \Delta_p u+u^s|\nabla u|^m\leq 0\quad in \ \mathbb{R}^n.
\end{align*}
They obtained Liouville type theorems when 
\begin{align*}
    s(n-p)+m(n-1)<n(p-1), \quad s+m>p-1.
\end{align*}
The cases $s(n-p)+m(n-1)=n(p-1) $ and $ s+m\leq p-1$ was proved by Filippucci \cite{Fi1,Fi2}. For more Liouville property, one can refer to \cite{BMGV,BGQ,CM,CHZ}.
\par
Next, let's return to results on Riemannian manifolds. Cheng and Yau \cite{CY} proved that if $V (B_r(x_0))\leq Cr^2$ for all large enough $r$, then any positive solution of $\Delta u\leq 0$ is constant. This idea was developed by Grigor'yan and Kondratiev \cite{GK}. They proved that, for some $C>0,\epsilon>0,x_0\in M$ and large enough $R>0$, the only nonnegative solution to the inequality
\begin{align}
    \Delta u+u^s\leq 0,\quad in\ M
\end{align}
is equal to 0 if 
\begin{align*}
    V(B_R(x_0))\leq C R^{\frac{2s}{s-1}}(\mathrm{log}\  R)^{\frac{1}{s-1}-\epsilon}.
\end{align*}
Here $B_R(x_0) $ is the geodesic ball on $M$ of radius $R$ centered at $x_0$. In fact, Grigor'yan and Kondratiev investigated the differential inequality with a potential. The sharpness of the exponent of $\mathrm{log}\ R$ was solved by Grigor'yan and Sun \cite{AS}, i.e $\epsilon=0$. Later, Mastrolia, Monticelli and Punzo \cite{MMP1} investigated the differential inequality with a potential to the $p-$Laplacian and generalized the result of Grigor'yan and Kondratiev. In 2022, Sun, Xiao and Xu \cite{SXX} proved Liouville type theorems for the quasilinear elliptic inequality 
\begin{align}
    \Delta_p u+u^s|\nabla u|^q\leq 0
\end{align}
on geodesically complete noncompact Riemannian manifolds.
\par
In the last few years, the $(p,q)$-Laplace operator attracts a lot of attention. The idea of studying such operators comes from the problems of the calculus of variations and nonlinear elasticity theory, cf. \cite{Ma1,Ma2,Zh1,Zh2}. Recently, Bhakta, Biswas and Filippucci \cite{BBF} established several Liouville-type theorems for differential quasilinear inequalities with $(p,q)$-Lapalce in the entire $\mathbb{R}^n$ (or an exterior domain). Wang and Zhang \cite{WZ} derived Liouville type theorems for $(p,q)$-harmonic functions on complete noncompact Riemannian manifold with nonnegative Ricci curvature. Bobkov and Tanaka \cite{BT} studied the existence and non-existence of positive solutions for the $(p,q)-$Laplace equation with two parameters.
\par
The aim of this paper is to obtain Liouville-type theorems for quasilinear inequalities driven by the $(p,q)$-Laplace operator on Riemannian manifolds.
\par
Throughout the paper, we assume that $1<q\leq p$ and $r(x)$ is the Riamannian distance from $x$ to a fixed point $x_0\in M$. Denote by $B_R$ the ball centered at $x_0$ with radius $R$. Since the constant $C>0$ is not important, it may vary at different occurrences.
\par
Before starting with the main theorems, we give the following definition. 

\begin{definition}
    Let $p\geq q>1, s>p-1$, $V>0$ a.e. on $M$ and $V\in L^1(M)$. For $z\in \{p,q\}$, we define
    \begin{align*}
        s_z=\frac{zs}{s-z+1}, \quad \bar{k}_z=\frac{z-1}{s-z+1}.
    \end{align*}
    We introduce the following growth conditions HP1, HP2 and HP3:
    \par
    HP1: Assume that, for $z\in \{p,q\}$, there exists $C_0,C>0$ and $k_z\in [0,\bar{k}_z)$ such that for every $R$ large enough and every $\epsilon$ small enough
    \begin{align}
        \int_{B_R\setminus  B_{\frac{R}{2}}} V^{-\bar{k}_z+\epsilon}d\mu \leq CR^{s_z+C_0\epsilon} (\mathrm{log}\ R)^{k_z}.
    \end{align}
    \par
    HP2: Assume that, for $z\in \{p,q\}$, there exists $C_0,C>0$ such that for every $R$ large enough and every $\epsilon$ small enough
    \begin{align}
        &\int_{B_R\setminus B_{\frac{R}{2}}} V^{-\bar{k}_z+\epsilon}d\mu \leq CR^{s_z+C_0\epsilon} (\mathrm{log}\ R)^{\bar{k}_z},
        \\
        &\int_{B_R\setminus B_{\frac{R}{2}}} V^{-\bar{k}_z-\epsilon}d\mu \leq CR^{s_z+C_0\epsilon} (\mathrm{log}\ R)^{\bar{k}_z}.
    \end{align}
    \par
    HP3: Assume that, for $z\in \{p,q\}$, there exists $C_0,C>0$, $k\geq 0$, $ \theta>0$ and $\tau_z >\mathrm{max}\{\frac{s-z+1}{s}(k+1),1\}$ such that for every $R$ large enough and every $\epsilon$ small enough
    \begin{align}
        \int_{B_R\setminus B_{\frac{R}{2}}} V^{-\bar{k}_z+\epsilon}d\mu \leq CR^{s_z+C_0\epsilon} (\mathrm{log}\ R)^{k} e^{-\epsilon\theta(\mathrm{log}\ R)^{\tau_z}}.
    \end{align}
\end{definition}
\begin{remark}
   (1) By Fatou’s Lemma, the above conditions hold for $\epsilon=0$.
   \\
   (2) In general, conditions HP1, HP2 and HP3 are mutually independent (cf. \cite{MMP1}).
\end{remark}
\begin{remark}
    The conditions (1.5)-(1.8) hold when the potential $V$ satisfies the following growth conditions.
    \\
    (1) Let $C_0,k_z$ be defined as in HP1. For $z\in\{p,q\}$, (1.5) holds if there exists constant $C>0$ such that 
    \begin{align}
        V(x)\leq C(1+r(x))^{C_0},\quad \int_{B_R\setminus  B_{\frac{R}{2}}} V^{-\bar{k}_z}d\mu \leq CR^{s_z} (\mathrm{log}\ R)^{k_z}
    \end{align}
    for $R>0$ large enough.
    \\
    (2) Let $C_0$ be defined as in HP2. For $z\in\{p,q\}$, (1.6) and (1.7) hold if there exists constant $C>0$ such that
    \begin{align}
        C^{-1}(1+r(x))^{-C_0}\leq V(x)\leq C(1+r(x))^{C_0},\quad \int_{B_R\setminus  B_{\frac{R}{2}}} V^{-\bar{k}_z}d\mu \leq CR^{s_z} (\mathrm{log}\ R)^{\bar{k}_z}
    \end{align}
    for $R>0$ large enough.
    \\
    (3) Let $C_0,k,\theta,\tau_z$ be defined as in HP3. For $z\in\{p,q\}$, (1.8) holds if there exists constant $C>0$ such that 
    \begin{align}
        V(x)\leq C(1+r(x))^{C_0}e^{-\theta(\mathrm{log}\ r(x))^{\tau_z}},\quad \int_{B_R\setminus  B_{\frac{R}{2}}} V^{-\bar{k}_z}d\mu \leq CR^{s_z} (\mathrm{log}\ R)^{k}
    \end{align}
    for $R>0$ large enough.
\end{remark}
     \begin{remark}
         The conditions highlight the dominant influence of the lower order term, that is $q$-Laplacian operator, which primarily affects the structure of the analysis. For example, if $V(x)=1$, then we only need the conditions hold for $z=q$.
         
     \end{remark}
       Now we state the main theorem of this paper (the notion of weak solutions is in Section 2).
       \begin{theorem}
           Let $p\geq q>1,\ s>p-1$ and $ V(x)\in L^1(M)$ with $V> 0$ a.e. on $M$. Assume that the condition HP1 holds. Let $u\in W^{1,p}_{loc}(M)\cap L^s_{loc}(M)$ be a nonnegative weak solution of 
           \begin{align}
               \Delta_pu+ \Delta_q u+V(x)u^s\leq 0, \quad in \ M
           \end{align}
           then $u\equiv0$ a.e on $M$.
       \end{theorem}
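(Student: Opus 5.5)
We argue as in Mastrolia--Monticelli--Punzo \cite{MMP1}, adapting the test function method to the two operators $\Delta_p$ and $\Delta_q$, which are treated separately. Fix a cutoff $\phi_R\in C_c^\infty(M)$ with $\phi_R=1$ on $B_{R/2}$, $\phi_R=0$ outside $B_R$, $0\le\phi_R\le 1$, $|\nabla\phi_R|\le C/R$. For a small parameter $\alpha\in(0,\min\{1,q-1\})$ we test the inequality with $\psi=(u+\delta)^{-\alpha}\phi_R^{\gamma}$, where $\delta>0$ and $\gamma$ is large; this test function is admissible since $u\in W^{1,p}_{loc}$. We then let $\delta\to 0$. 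For $z\in\{p,q\}$ this gives
\begin{align*}
\alpha\int |\nabla u|^{z}u^{-\alpha-1}\phi_R^\gamma+\ldots+\int V u^{s-\alpha}\phi_R^\gamma\le \sum_{z}\gamma\int |\nabla u|^{z-1}u^{-\alpha}\phi_R^{\gamma-1}|\nabla\phi_R|,
\end{align*}
where the left side contains both gradient terms. The key point is that $-\Delta_p u-\Delta_q u$ tested this way yields two positive gradient integrals $I_z=\int|\nabla u|^z u^{-\alpha-1}\phi_R^\gamma$ with no cross term.

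Each right-hand term is handled by Young's inequality with exponents $z/(z-1)$ and $z$. Part of it is absorbed into $\frac{\alpha}{2}I_z$, and the remainder is $C\int u^{z-1-\alpha}\phi_R^{\gamma-z}|\nabla\phi_R|^z$. Since $s>p-1\ge z-1$, we apply Young once more with exponents $\frac{s-\alpha}{z-1-\alpha}$ and its conjugate, weighted by $V$. This absorbs half into $\int Vu^{s-\alpha}\phi_R^\gamma$ and leaves
\begin{align*}
C\int_{B_R\setminus B_{R/2}} V^{-\frac{z-1-\alpha}{s-z+1}}|\nabla\phi_R|^{\frac{z(s-\alpha)}{s-z+1}}\phi_R^{\gamma-\frac{z(s-\alpha)}{s-z+1}}\,d\mu .
\end{align*}
Here the exponent of $V$ is $-\bar k_z+\epsilon$ with $\epsilon=\frac{\alpha}{s-z+1}$, and the power of $R^{-1}$ is $s_z-\frac{z\alpha}{s-z+1}$. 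By HP1 for $z=p$ and $z=q$, each contribution is bounded by $C R^{-c\alpha+C_0\epsilon}(\log R)^{k_z}$, so both $\int_{B_{R/2}}Vu^{s-\alpha}$ and $I_z$ are $\lesssim \alpha^{-\beta}R^{-c'\alpha}(\log R)^{k_z}$, with $c'>0$ after adjusting constants. The $\alpha^{-\beta}$ factor comes from the Young constants; tracking its explicit power is needed, with $\beta$ tied to $\bar k_z$.

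The main obstacle is to turn this into a bound for $\int Vu^s$ itself. Following \cite{MMP1}, we choose $\alpha=1/\log R$, so that $R^{-c'\alpha}$ is bounded. The estimate for $I_z$ then becomes $C(\log R)^{\beta+k_z}$. Next we test with $\psi=\phi_R^\gamma$ directly, i.e. $\alpha=0$. This gives $\int Vu^s\phi_R^\gamma\le C\sum_z\int|\nabla u|^{z-1}\phi_R^{\gamma-1}|\nabla\phi_R|$. We split $|\nabla u|^{z-1}=(|\nabla u|^{z-1}u^{-(\alpha+1)(z-1)/z})\,u^{(\alpha+1)(z-1)/z}$ and apply Hölder, which bounds each term by $I_z^{(z-1)/z}$ times $\big(\int_{B_R\setminus B_{R/2}} u^{(\alpha+1)(z-1)}|\nabla\phi_R|^z\big)^{1/z}$. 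The second factor is controlled via Hölder with $V$ by $\big(\int_{B_R\setminus B_{R/2}}Vu^s\big)^{\cdot}$ times a power of the HP1 integral. The restriction $k_z<\bar k_z$ is exactly what makes the total power of $\log R$ nonpositive, so $\int_M Vu^s<\infty$. Repeating the same Hölder estimate, now with the annular factor $\int_{B_R\setminus B_{R/2}}Vu^s\to 0$ as $R\to\infty$, gives $\int_M Vu^s=0$. Since $V>0$ a.e., this yields $u\equiv0$.

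The delicate bookkeeping is choosing $\alpha$ and checking the $\log$ exponents separately for $z=p$ and $z=q$. The $q$-term is the binding one, as the paper's remark suggests.
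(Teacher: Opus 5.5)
There is a genuine gap, and it comes from the choice of cutoff. With $\phi_R$ supported in $B_R$ and $|\nabla\phi_R|\le C/R$, your first step actually gives
\[
\int_{B_{R/2}}Vu^{s-\alpha}\,d\mu+\frac{\alpha}{2}I_z\le C\sum_{w\in\{p,q\}}\alpha^{-\frac{s(w-1)}{s-w+1}}R^{\frac{(w+C_0)\alpha}{s-w+1}}(\log R)^{k_w}.
\]
The reason is that $|\nabla\phi_R|^{z(s-\alpha)/(s-z+1)}\le CR^{-s_z+z\alpha/(s-z+1)}$, so the power of $R$ is positive, not $-c'\alpha$. With $\alpha=1/\log R$ this bound is of size $(\log R)^{\frac{s(z-1)}{s-z+1}+k_z}$, which diverges.

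Your H\"older step cannot repair this. Insert $I_z\lesssim(\log R)^{1+\frac{s(z-1)}{s-z+1}+k_z}$. The coefficient you then get in front of $\bigl(\int_{B_R\setminus B_{R/2}}Vu^s\,d\mu\bigr)^{(1+\alpha)(z-1)/(sz)}$ is of order $(\log R)^N$ with $N\ge\frac{z-1}{z}\bigl(1+\frac{s(z-1)}{s-z+1}\bigr)>0$, whatever $k_z$ is. Absorption therefore yields only $\int_{B_{R/2}}Vu^s\,d\mu\lesssim(\log R)^{N'}$ with $N'>0$, not $\int_M Vu^s<\infty$. So your claim that $k_z<\bar k_z$ makes the total log exponent nonpositive is false.

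There is a second, independent problem. That H\"older step raises $V$ to the power $-\frac{(1+\alpha)(z-1)}{s-(1+\alpha)(z-1)}=-\bar k_z-\delta_z$ with $\delta_z>0$. HP1 only controls $V^{-\bar k_z+\epsilon}$. Integrability of $V^{-\bar k_z-\epsilon}$ is exactly the extra condition (1.7) in HP2, which is why the paper uses this two-step argument (Lemma 2.3) only for Theorem 1.6.

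The missing idea is the slowly decaying test function of Grigor'yan--Sun type used in the paper. Take $\varphi=1$ on $B_R$ and $\varphi=(r/R)^{-C_1a}$ for $r>R$, with $a=1/\log R$ and $C_1$ large, truncated by $\eta_n$ with $n\to\infty$. Its gradient $|\nabla\varphi|\le Ca\,r^{-1}(r/R)^{-C_1a}$ is small and spread over all of $M\setminus B_R$. By Lemma 2.4, the right-hand side of Lemma 2.2 is then bounded, up to factors $a^{O(a)}\to 1$, by
\[
a^{-\frac{s(z-1)}{s-z+1}}\,a^{s_z}\int_{R/2}^\infty r^{-1-ca}(\log r)^{k_z}\,dr\lesssim a^{s_z-1-k_z-\frac{s(z-1)}{s-z+1}}=a^{\bar k_z-k_z}.
\]
Hence $\int_{B_R}Vu^{s-a}\,d\mu\le Ca^{\bar k_p-k_p}+Ca^{\bar k_q-k_q}\to 0$ as $R\to\infty$; this is exactly where $k_z<\bar k_z$ is used. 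Fatou's lemma then gives $\int_M Vu^s\,d\mu=0$ directly from this one-step estimate. No H\"older step is needed, and no power of $V$ more negative than $-\bar k_z$ ever appears.
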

       \begin{theorem}
           Let $p\geq q>1,\ s>p-1$ and $ V(x)\in L^1(M)$ with $V> 0$ a.e. on $M$. Assume that the condition HP2 holds. Let $u\in W^{1,p}_{loc}(M)\cap L^s_{loc}(M)$ be a nonnegative weak solution of (1.12),
           then $u\equiv0$ a.e on $M$.
       \end{theorem}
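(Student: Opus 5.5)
The plan is to adapt the test-function scheme of Mastrolia, Monticelli and Punzo \cite{MMP1} for the $p$-Laplacian to the two-operator setting. Each of $\Delta_p$ and $\Delta_q$ is treated separately with its own exponents $s_z,\bar k_z$, $z\in\{p,q\}$. The argument has three steps.

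\textbf{Step 1: a priori estimate.} Since $u$ is a weak supersolution, I test (1.12) with $\varphi=\psi^{t}(u+\delta)^{-\alpha}$, where $\alpha>0$ is small, $\delta>0$, $t$ is large and $\psi$ is a Lipschitz cutoff with $0\le\psi\le1$. Using Young's inequality on the two gradient cross terms, one for each $z\in\{p,q\}$, and letting $\delta\to0$, I aim for
\begin{align*}
\int_M V u^{s-\alpha}\psi^t\,d\mu+\sum_{z\in\{p,q\}}\alpha\int_M |\nabla u|^z u^{-\alpha-1}\psi^t\,d\mu\le C\sum_{z\in\{p,q\}}\alpha^{-\gamma_z}\int_M V^{-\frac{z-1-\alpha}{s-z+1+\alpha}}|\nabla\psi|^{\frac{z(s-\alpha)}{s-z+1+\alpha}}\,d\mu .
\end{align*}
Here $\gamma_z>0$ are explicit constants. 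The exponent of $V$ equals $-\bar k_z+\epsilon$ with $\epsilon\simeq c\alpha$. Taking $\alpha\simeq 1/\log R$ and using the first inequality (1.6) of HP2 on dyadic annuli, the right-hand side is bounded independently of $R$. For this I choose $\psi$ with $|\nabla\psi|\lesssim 1/r$ on $B_R\setminus B_{R/2}$, and the factor $R^{C_0\epsilon}$ then stays bounded.

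\textbf{Step 2: passing from $u^{s-\alpha}$ to $u^s$.} This is the step where the hypothesis $V\in L^1(M)$ enters. On the set $\{u\le1\}$ we have $Vu^s\le V$, which is integrable. On the set $\{u>1\}$ we have $Vu^{s}\le Vu^{s-\alpha}u^{\alpha}$. I therefore redo the estimate with $\alpha$ replaced by $-\alpha$, i.e.\ testing with $\psi^t u^{\alpha}$ for $0<\alpha<\min\{1,z-1\}$. The resulting weight is $V^{-\bar k_z-\epsilon}$, which is exactly what the second inequality (1.7) of HP2 controls. Combining the two signs of $\alpha$ and letting $R\to\infty$, I expect to obtain
\[
\int_M Vu^s\,d\mu<\infty \qquad\text{and}\qquad \int_M|\nabla u|^z u^{-\alpha-1}\,d\mu<\infty .
\]

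\textbf{Step 3: conclusion.} I now test with $\psi^t$ itself, where $\psi$ is the logarithmic cutoff
\[
\psi=\frac{\log(R^2/r)}{\log R}\quad\text{on } B_{R^2}\setminus B_R .
\]
This gives
\[
\int Vu^s\psi^t\le C\sum_z\int_{B_{R^2}\setminus B_R}|\nabla u|^{z-1}|\nabla\psi|\psi^{t-1}.
\]
I then apply H\"older's inequality with weights $u^{\pm(\alpha+1)(z-1)/z}$. One resulting factor is the finite gradient integral from Step 2, restricted to the annulus, so it tends to $0$ as $R\to\infty$. The other factor is bounded by an integral controlled through Step 1 and (1.6)--(1.7), in which the critical power $(\log R)^{\bar k_z}$ is compensated by $|\nabla\psi|\sim 1/(r\log R)$. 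Hence $\int_M Vu^s=0$, and since $V>0$ a.e., $u\equiv0$.

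The main obstacle is the bookkeeping of the critical logarithmic exponent $\bar k_z$ in Step 3. The choices of $\alpha$ and $t$ and the H\"older split must make the two terms, for $z=p$ and for $z=q$, \emph{both} exactly borderline-bounded, with a vanishing factor coming from an integrable tail. I would first try $\alpha=1/\log R$ and verify that the exponent of $\log R$ in each term is $\le0$.
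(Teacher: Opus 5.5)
You have a genuine gap: Step 2 fails as written, and Step 3 depends on it.

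\textbf{Step 2 and its use in Step 3.} Testing with $\psi^t u^{\alpha}$, $\alpha>0$, gives $\nabla(u^\alpha\psi^t)=\alpha u^{\alpha-1}\psi^t\nabla u+\dots$. So the weighted gradient term enters (2.1) with the wrong sign, and you get $\int_M Vu^{s+\alpha}\psi^t\,d\mu\le\alpha\sum_z\int_M|\nabla u|^z u^{\alpha-1}\psi^t\,d\mu+\dots$. That term cannot be absorbed, so this test function yields no Step~1-type estimate with weight $V^{-\bar k_z-\epsilon}$.

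Consequently, the claimed bound $\int_M|\nabla u|^z u^{-\alpha-1}\,d\mu<\infty$ is not available. Even a correct Step~1 (see below) gives only $\alpha\int|\nabla u|^z u^{-\alpha-1}\psi^t\le C$. That is a bound of order $\log R$ when $\alpha=1/\log R$, and for fixed $\alpha$ the right-hand side grows like $R^{c\alpha}$.

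So in Step 3 the gradient factor is not the one that tends to zero; the bookkeeping runs the other way. In the paper (Lemma 2.3 and Section 4), that factor is bounded only by $(Ca^{-1}Q)^{(z-1)/z}\sim a^{-(z-1)/z}$. The remaining factor $\int\varphi^{b-z}u^{(a+1)(z-1)}|\nabla\varphi|^z$ is then H\"older-split a second time against $Vu^s$ on $M\setminus K$. This second split is where (1.7) is actually used:
\begin{itemize}
\item the weight is $V^{-\bar k_z-\delta_z}$ with $\delta_z\sim a$;
\item one gets $J_z\lesssim a^{s_z+z\delta_z-\bar k_z-1}$;
\item hence $J_z^{(s-(a+1)(z-1))/(sz)}\lesssim a^{(z-1)/z}$, which cancels $a^{-(z-1)/z}$ exactly.
\end{itemize}
What tends to zero is $\bigl(\int_{M\setminus B_R}Vu^s\,d\mu\bigr)^{(a+1)(z-1)/(sz)}$. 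So the only extra input needed is $\int_M Vu^s\,d\mu<\infty$.

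\textbf{Step 1.} Step~1 also fails with the cutoff you specify. If $|\nabla\psi|\lesssim 1/r$ on a single annulus, (1.6) bounds the right-hand side only by $C\alpha^{-\gamma_z}(\log R)^{\bar k_z}$, with $\gamma_z\approx s(z-1)/(s-z+1)$. This is unbounded.

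You need $|\nabla\psi|\lesssim\alpha/r$ spread over about $1/\alpha$ dyadic scales. The paper's $\varphi=(r/R)^{-C_1a}$ outside $B_R$ (cut off by $\eta_n$) does this, and so would your own logarithmic cutoff from Step~3. Then the gain $\alpha^{z(s-\alpha)/(s-z+1)}$ and the loss $\alpha^{-1-\bar k_z}$ from summing the scales cancel $\alpha^{-\gamma_z}$ exactly.

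A smaller correction: the Young conjugate exponent is $\frac{s-\alpha}{s-z+1}$. So the weights should be $V^{-\frac{z-1-\alpha}{s-z+1}}$ and $|\nabla\psi|^{\frac{z(s-\alpha)}{s-z+1}}$.

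\textbf{Why Step 2 is unnecessary.} With Step~1 done this way, $\int_{B_R}Vu^{s-a}\,d\mu\le C$ uniformly in $R$, with $a=1/\log R\to0$. Fatou's lemma then gives $\int_M Vu^s\,d\mu<\infty$ directly. So Step~2, including the splitting over $\{u\le 1\}$ and $\{u>1\}$, is not needed. The paper obtains the same bound differently, by absorbing $\int Vu^s\varphi_n^b$ in (4.5)--(4.6); this works because the exponents $(a+1)(z-1)/(sz)$ are less than $1$.
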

       \begin{theorem}
           Let $p\geq q>1,\ s>p-1$ and $ V(x)\in L^1(M)$ with $V> 0$ a.e. on $M$. Assume that the condition HP3 holds. Let $u\in W^{1,p}_{loc}(M)\cap L^s_{loc}(M)$ be a nonnegative weak solution of (1.12),
           then $u\equiv0$ a.e on $M$.
       \end{theorem}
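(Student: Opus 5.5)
The plan is to run the test-function argument of Mastrolia, Monticelli and Punzo \cite{MMP1} with a \emph{negative} power of $u$, handling the two operators $\Delta_p$ and $\Delta_q$ separately in the Young-inequality step. The decay factor $e^{-\epsilon\theta(\log R)^{\tau_z}}$ in HP3 should then absorb every loss coming from $\epsilon\neq 0$, provided $t$ is tied to $R$.

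\textbf{Regularization and choice of test function.} Fix $\delta>0$ and set $u_\delta=u+\delta$. Let $t<0$ with $|t|$ small, and take the test function $u_\delta^{t}\varphi^m$. Here $\varphi\in C_c^\infty(M)$, $0\le\varphi\le1$, $\varphi\equiv1$ on $B_{R/2}$, $\varphi\equiv0$ outside $B_R$, $|\nabla\varphi|\le C/R$, and $m$ is large. Testing the weak form of (1.12) gives, for $z\in\{p,q\}$,
\begin{align*}
\int_M V u^s u_\delta^{t}\varphi^m\,d\mu+|t|\sum_{z\in\{p,q\}}\int_M |\nabla u|^z u_\delta^{t-1}\varphi^m\,d\mu
\le m\sum_{z\in\{p,q\}}\int_M |\nabla u|^{z-1}u_\delta^{t}\varphi^{m-1}|\nabla\varphi|\,d\mu .
\end{align*}

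\textbf{Key estimate.} On each right-hand term I apply Young's inequality twice.
\begin{itemize}
\item The first application absorbs half of $|t|\int|\nabla u|^zu_\delta^{t-1}\varphi^m$.
\item This leaves $C|t|^{1-z}\int u_\delta^{z-1+t}\varphi^{m-z}|\nabla\varphi|^z$.
\item The second application, with the pair of exponents $\frac{s+t}{z-1+t}$ and its conjugate, absorbs half of $\int V u_\delta^{s+t}\varphi^m$.
\end{itemize}
At this point $V\in L^1(M)$ is needed: replacing $u^s u_\delta^t$ by $u_\delta^{s+t}$ costs an error of order $\delta^{s+t}\int_{B_R} V$, which is finite and vanishes as $\delta\to0$. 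The outcome, uniformly in $\delta$, is
\begin{align*}
\int_{B_{R/2}}V u_\delta^{s+t}\,d\mu\le C|t|^{-\alpha}\sum_{z\in\{p,q\}}R^{-\frac{z(s+t)}{s-z+1+|t|}}\int_{B_R\setminus B_{R/2}}V^{-\frac{z-1+t}{s-z+1+|t|}}\,d\mu+o_\delta(1),
\end{align*}
where $\alpha>0$ depends only on $p,q,s$. The $V$-exponent equals $-\bar k_z+\epsilon_z$ with $\epsilon_z\asymp|t|$. Likewise the $R$-power equals $R^{-s_z+O(|t|)}$.

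\textbf{Applying HP3 and choosing $t$.} By HP3, each summand is bounded by
\begin{align*}
C|t|^{-\alpha}R^{C|t|}(\log R)^{k}e^{-c|t|\theta(\log R)^{\tau_z}} .
\end{align*}
I choose $|t|=(\log R)^{-\beta}$ with $1\le\beta<\tau_z$ for both $z$, which is possible because $\tau_z>1$. Then:
\begin{itemize}
\item $R^{C|t|}=e^{C(\log R)^{1-\beta}}$ stays bounded, since $\beta\ge1$;
\item $|t|^{-\alpha}(\log R)^k=(\log R)^{\alpha\beta+k}$ grows only polynomially in $\log R$;
\item $e^{-c\theta(\log R)^{\tau_z-\beta}}$ decays faster than any power of $\log R$.
\end{itemize}
Hence the right side tends to $0$ as $R\to\infty$.

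If the constants from Young's inequality turn out to force a more delicate balance, I would instead run the argument of HP1/HP2 on dyadic annuli. In that version $t$ is tuned so that $|t|^{-\alpha}(\log R)^{k}$ is matched against the decay. This is exactly where the threshold $\tau_z>\frac{s-z+1}{s}(k+1)$ should enter, so I expect this balancing step to be the main obstacle.

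\textbf{Conclusion.} First let $\delta\to0$, then let $R\to\infty$; along the way $t=t(R)\to0^-$. Fatou's lemma on a fixed ball $B_{R_0}$ then gives $\int_{B_{R_0}}Vu^s\,d\mu=0$ for every $R_0$. Since $V>0$ a.e., it follows that $u\equiv0$ a.e. on $M$.
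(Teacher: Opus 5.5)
Your argument is correct, and it reaches the conclusion through a different use of HP3 than the paper's.

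\textbf{Your route.} Your key estimate is just Lemma 2.2 (with $a=|t|$) applied to an ordinary cutoff supported in $B_R$. So only the annulus $B_R\setminus B_{R/2}$ enters, and HP3 is used once, at radius $R$ with $\epsilon_z=|t|/(s-z+1)$. The conjugate exponent is $\frac{s+t}{s-z+1}$, so the denominators in your display should read $s-z+1$ rather than $s-z+1+|t|$; this slip is harmless. Since that annulus lies at distance at least $R/2$, taking $|t|=1/\log R$ turns the HP3 factor into $e^{-c\theta(\log R)^{\tau_z-1}}$, while $R^{C|t|}=e^{C}$ stays bounded. This factor kills the polynomial loss $(\log R)^{\alpha+k}$, so you only use $\tau_z>1$.

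\textbf{The paper's route.} The paper instead uses the non-compactly supported profile $\min\{1,(r/R)^{-C_1a}\}$, truncated by $\eta_n$ with $n\to\infty$. It sums HP3 over all dyadic annuli through Lemma 2.4(3). After substituting $y=(a\theta/(s-p+1))^{1/\tau_p}\log r$, it extends the integral to $(0,\infty)$, which discards the information $r\ge R/2$. That yields only the polynomial gain $I_{p,1}\le Ca^{\frac{p(s-a)}{s-p+1}-\frac{k+1}{\tau_p}}$, so the final bound is $a^{\frac{s}{s-p+1}-\frac{k+1}{\tau_p}}$. This is precisely where the paper needs $\tau_z>\frac{s-z+1}{s}(k+1)$; the assumption $\tau_z>1$ plays no role there.

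\textbf{What each buys.} Your route is shorter (no $\eta_n$, no Lemma 2.4). It also shows that, under HP3, the first entry of the maximum is superfluous. The paper's global profile is what HP1 and HP2 genuinely require, since they offer no exponential gain; for HP3 it is simply recycled. Consequently your closing paragraph is unnecessary. The fallback you sketch is essentially the paper's proof, and the balancing obstacle you anticipate does not arise in your main argument.

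One small precision: the $\delta$-error is cleanest via $u_\delta^{s+t}\le 2^s u^s u_\delta^{t}+(2\delta)^{s+t}$. This lets you absorb with the constant $2^{-s-2}$ in place of $\frac14$, and it uses only $V\in L^1_{loc}$.
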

       \par
        Theorem 1.5, Theorem 1.6 and Theorem 1.7 can be regard as the $(p,q)- $Laplace counterparts of the results in \cite{MMP1}. From Remark 1.3, we have the following corollary.
       \begin{corollary}
           Let $p,q,V $ and $u$ be defined as in Theorem 1.5. Then $u\equiv0$ a.e on $M$ provided that one of the (1.9), (1.10) or (1.11) holds.
       \end{corollary}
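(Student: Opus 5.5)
The corollary follows by reduction to Theorems 1.5--1.7. Remark 1.3 asserts that each pointwise/integral growth condition (1.9), (1.10), (1.11) implies the corresponding hypothesis HP1, HP2, HP3. Once the relevant HP condition is verified, the matching theorem gives $u\equiv 0$ a.e. on $M$. So the plan is to supply the short verification behind Remark 1.3 in each case, uniformly for $z\in\{p,q\}$, and then invoke the theorem.

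\emph{Case (1.9).} On $B_R\setminus B_{R/2}$ we have $r(x)\le R$, hence $V\le C(1+R)^{C_0}\le CR^{C_0}$ for $R$ large. For $\epsilon>0$ small,
\begin{align*}
V^{-\bar{k}_z+\epsilon}=V^{-\bar{k}_z}V^{\epsilon}\le C^{\epsilon}R^{C_0\epsilon}V^{-\bar{k}_z}.
\end{align*}
Integrating over the annulus and using the integral bound in (1.9) gives (1.5) with the same $k_z\in[0,\bar{k}_z)$. The constant $C^{\epsilon}$ is bounded uniformly for small $\epsilon$. Hence HP1 holds, and Theorem 1.5 applies.

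\emph{Case (1.10).} The upper bound on $V$ gives (1.6) exactly as above. The lower bound $V\ge C^{-1}(1+r)^{-C_0}$ gives, on the annulus,
\begin{align*}
V^{-\epsilon}\le C^{\epsilon}(1+R)^{C_0\epsilon}\le CR^{C_0\epsilon}.
\end{align*}
Therefore $V^{-\bar{k}_z-\epsilon}\le CR^{C_0\epsilon}V^{-\bar{k}_z}$, and integrating yields (1.7). Hence HP2 holds, and Theorem 1.6 applies.

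\emph{Case (1.11).} The function $t\mapsto e^{-\theta(\log t)^{\tau_z}}$ is decreasing for $t>1$. It does decrease by a bounded factor across the annulus, since $(\log R)^{\tau_z}-(\log(R/2))^{\tau_z}\approx \tau_z(\log 2)(\log R)^{\tau_z-1}$, which is unbounded when $\tau_z>1$. But only the upper bound of $V$ at radius $\le R$ is needed, so monotonicity suffices. For $R/2\le r(x)\le R$,
\begin{align*}
V^{\epsilon}\le C^{\epsilon}(1+R)^{C_0\epsilon}e^{-\epsilon\theta(\log r(x))^{\tau_z}}.
\end{align*}
Here the exponential factor is bounded by its value at the \emph{smallest} radius, $e^{-\epsilon\theta(\log (R/2))^{\tau_z}}$, not $e^{-\epsilon\theta(\log R)^{\tau_z}}$. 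This is the one delicate point. I would handle it by applying the estimate at scale $2R$: equivalently, replace $\theta$ by a slightly smaller $\theta'>0$ and absorb $(\log R)^{\tau_z}-(\log(R/2))^{\tau_z}=o((\log R)^{\tau_z})$, so that for $R$ large
\begin{align*}
e^{-\epsilon\theta(\log(R/2))^{\tau_z}}\le e^{-\epsilon\theta'(\log R)^{\tau_z}}.
\end{align*}
HP3 only asks for some $\theta>0$, so this is harmless. Multiplying by $V^{-\bar{k}_z}$ and integrating with the bound in (1.11) gives (1.8) with $\theta'$. Theorem 1.7 then gives $u\equiv0$.

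The only step needing care is this last one, the mismatch between the inner and outer radius in the exponential weight. The rest is direct substitution.
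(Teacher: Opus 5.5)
Your proposal is correct and takes the paper's route: the paper gets the corollary in one line from Remark 1.3 (that (1.9), (1.10), (1.11) imply HP1, HP2, HP3, respectively) together with Theorems 1.5--1.7. Your argument supplies the verification the paper omits, including replacing $\theta$ by a smaller $\theta'>0$ in case (1.11); Remark 1.3 passes over this point, and it is harmless because Theorem 1.7 only needs some $\theta>0$. One small slip: in that case you meant that the weight $e^{-\theta(\log t)^{\tau_z}}$ does \emph{not} change by a bounded factor across the annulus, as your own estimate of $(\log R)^{\tau_z}-(\log(R/2))^{\tau_z}$ shows.
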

       For the Euclidean space $\mathbb{R}^n$, we have 
       \begin{corollary}
           Let $(M,g)=(\mathbb{R}^n,g_{flat}),p\geq q>1$ and $V=1$. If 
           \begin{align*}
               p-1<s, \quad (n-q)s\leq n(q-1),
           \end{align*}
           then any nonnegative weak solution of (1.12) is equal to 0 a.e on $M$.
       \end{corollary}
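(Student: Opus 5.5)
The plan is to deduce the statement directly from Theorem 1.5 by checking that the growth condition HP1 holds for $V\equiv 1$ on $(\mathbb{R}^n,g_{flat})$. Since $V\equiv 1$, the integrand $V^{-\bar{k}_z+\epsilon}$ equals $1$ for every $\epsilon$. Hence, for $z\in\{p,q\}$ and $R$ large,
\begin{align*}
\int_{B_R\setminus B_{\frac{R}{2}}} V^{-\bar{k}_z+\epsilon}\,d\mu=|B_R\setminus B_{\frac R2}|=\omega_n(1-2^{-n})R^n ,
\end{align*}
where $\omega_n$ is the volume of the unit ball. Because $R^{C_0\epsilon}\geq 1$ for $R\ge 1$, any $C_0>0$ is admissible. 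It therefore suffices to have $R^n\le CR^{s_z}$, with the choice $k_z=0$. This choice is legitimate because $\bar{k}_z=\frac{z-1}{s-z+1}>0$ when $s>p-1\ge z-1$, so $0\in[0,\bar{k}_z)$.

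The next step is to reduce $n\le s_z$ to the stated hypothesis. Since $s-z+1>0$, the inequality $n\le \frac{zs}{s-z+1}$ is equivalent to $n(s-z+1)\le zs$, that is, $(n-z)s\le n(z-1)$. For $z=q$ this is exactly the assumption $(n-q)s\le n(q-1)$. For $z=p$, write
\begin{align*}
f(z)=n(z-1)-(n-z)s=z(n+s)-n(1+s).
\end{align*}
This function is increasing in $z$, so $f(p)\ge f(q)\ge 0$ because $p\ge q$. Thus $n\le s_p$ as well. This reflects the point of Remark 1.4: the $q$-condition is the binding one.

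The borderline case $(n-q)s=n(q-1)$, i.e.\ $n=s_q$, needs no logarithmic factor. Indeed $R^n=R^{s_q}$, so (1.5) holds with $k_q=0$, and HP1 is satisfied in all cases. Theorem 1.5 then gives $u\equiv 0$ a.e.

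The only delicate point is the standing hypothesis $V\in L^1(M)$, which $V\equiv1$ does not satisfy on $\mathbb{R}^n$. I expect the proof of Theorem 1.5 uses only local integrability of $V$ (so that the weak formulation makes sense against compactly supported test functions). I would check that this is indeed the case in Section 2 and read the hypothesis as $V\in L^1_{loc}(M)$. If instead a global integral of $V$ were used somewhere, that step would be the main obstacle and would have to be revisited.
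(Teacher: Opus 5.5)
Your proof is correct and is essentially the argument the paper intends. The corollary is stated without proof right after Corollary 1.8: with $V\equiv 1$, condition (1.9), and hence HP1, holds with $k_z=0$ exactly when $n\le s_z$, i.e. $(n-z)s\le n(z-1)$; the case $z=p$ follows from $z=q$, and Theorem 1.5 then applies. You are also right to read the hypothesis $V\in L^1(M)$ as $V\in L^1_{loc}(M)$, since every integral in Lemma 2.2 and in Section 3 is taken against compactly supported test functions such as $\varphi_n$, so global integrability of $V$ is never used.
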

           Corollary 1.9 improves the result in \cite{BBF} by proving the nonexistence results for the range $q\geq n$. Similar results can be obtained on Riemannian manifolds with nonnegative Ricci curvature. 
           \begin{remark}
               From the proof of Theorem, we can establish nonexistence results for a wider class of quasilinear inequalities of the type
               \begin{align*}
                   \mathrm{div}(|\nabla u|^{q-2}f(|\nabla u|)\nabla u)+V(x,t)u^s\leq 0,\quad in\  M.
               \end{align*}
               Here $f$ satisfies the condition that there exists constants $b_i\geq a_i>0$ and $v_i\geq 0$ such that
               \begin{align*}
                   \sum\limits_{i=1}^{k} a_it^{v_i}\leq f(t) \leq \sum\limits_{i=1}^{k} b_it^{v_i}, \quad \forall t\geq 0.
               \end{align*}
           \end{remark} 
           The rest of the paper is organized as follows. In Section 2, we prove some preliminary results, which will be used in the proof. In Section 3 to 5, we give the proof of Theorem 1.5, Theorem 1.6 and Theorem 1.7 respectively. The proof relies on the test function argument and integral estimates.

           \section{Preliminary}
           First we give the definition of the weak solution of (1.12). 
           \begin{definition}
               Let $p\geq q>1,\ s>p-1$ and $ V(x)\in L^1(M)$ with $V> 0$ a.e.on $M$. We say that $u\in W^{1,p}_{loc}(M)\cap L^s_{loc}(M)$ is a nonnegative weak solution of (1.12) if $u\geq 0$ and for every $0\leq \psi\in W^{1,p}(M)\cap L^{\infty}(M)$ with compact support, one has
               \begin{align}
                   -\int_M |\nabla u|^{p-2}\langle\nabla u,\nabla\psi\rangle d\mu -\int_M |\nabla u|^{q-2}\langle\nabla u,\nabla\psi\rangle d\mu+\int_MVu^s\psi d\mu \leq 0.
               \end{align}
           \end{definition}
           Using a test function argument, we give the following two lemmas.
           \begin{lemma}
               Assume that $u$ is a nonnegative weak solution of (1.12), then there exist a positive pair $(a,b)$ and a constant $C>0$ such that for any $ \varphi\in \mathrm{Lip}(M)$ with compact support and $0\leq \varphi\leq 1$, the following estimate holds
               \begin{align}
                  & \frac{a}{2}\int_M |\nabla u|^pu^{-1-a}\varphi^b d\mu+\frac{a}{2}\int_M |\nabla u|^q u^{-1-a}\varphi^b d\mu+\frac{1}{2}\int_M Vu^{s-a}\varphi^b d\mu
                    \nonumber\\
                   \leq &C a^{-\frac{s(p-1)}{s-p+1}}\int_M V^{-\frac{p-a-1}{s-p+1}} |\nabla \varphi|^{\frac{(s-a)p}{s-p+1}}d\mu+C a^{-\frac{s(q-1)}{s-q+1}}\int_M V^{-\frac{q-a-1}{s-q+1}} |\nabla \varphi|^{\frac{(s-a)q}{s-q+1}}d\mu   ,        
               \end{align}
               where $ a\in (0,\mathrm{min}\{1,q-1\})$ and $b>\frac{ps}{s-p+1}$.
           \end{lemma}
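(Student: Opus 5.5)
The plan is the standard test-function argument, with the negative-power test function $\psi=(u+\delta)^{-a}\varphi^b$, $\delta>0$, and $\delta\to0$ at the end. This $\psi$ is admissible in Definition 2.1: it is nonnegative, bounded by $\delta^{-a}$, compactly supported, and lies in $W^{1,p}$ because $u\in W^{1,p}_{loc}$ and $\varphi$ is Lipschitz. Its gradient is
\begin{align*}
\nabla\psi=-a(u+\delta)^{-a-1}\varphi^b\nabla u+b(u+\delta)^{-a}\varphi^{b-1}\nabla\varphi .
\end{align*}
Plugging into (2.1) and moving the terms with a sign to the left gives, for $z\in\{p,q\}$,
\begin{align*}
a\sum_{z}\int_M|\nabla u|^z(u+\delta)^{-1-a}\varphi^b\,d\mu+\int_M Vu^s(u+\delta)^{-a}\varphi^b\,d\mu\le b\sum_z\int_M|\nabla u|^{z-1}(u+\delta)^{-a}\varphi^{b-1}|\nabla\varphi|\,d\mu .
\end{align*}

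Next I absorb the right-hand side, treating $z=p$ and $z=q$ separately but identically. First apply Young with exponents $z/(z-1)$ and $z$:
\begin{align*}
b|\nabla u|^{z-1}(u+\delta)^{-a}\varphi^{b-1}|\nabla\varphi|\le \tfrac a2|\nabla u|^z(u+\delta)^{-1-a}\varphi^b+C a^{-(z-1)}(u+\delta)^{z-1-a}\varphi^{b-z}|\nabla\varphi|^z .
\end{align*}
The weights match because $(u+\delta)^{-a}=(u+\delta)^{(-1-a)(z-1)/z}(u+\delta)^{(z-1-a)/z}$. This leaves the term $Ca^{1-z}\int (u+\delta)^{z-1-a}\varphi^{b-z}|\nabla\varphi|^z$. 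I bound it by Young again, writing it as $\big(V^{\frac{z-1-a}{s-a}}(u+\delta)^{z-1-a}\varphi^{b\frac{z-1-a}{s-a}}\big)\cdot\big(a^{1-z}V^{-\frac{z-1-a}{s-a}}\varphi^{b-z-b\frac{z-1-a}{s-a}}|\nabla\varphi|^z\big)$ with exponents $\frac{s-a}{z-1-a}$ and $\frac{s-a}{s-z+1}$. Here $a<q-1\le z-1$ keeps $z-1-a>0$, and $s>p-1\ge z-1$ keeps $s-a>z-1-a$.

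The first factor gives $\tfrac14\int V(u+\delta)^{s-a}\varphi^b$. The second gives
\begin{align*}
C\,a^{-\frac{(z-1)(s-a)}{s-z+1}}\,V^{-\frac{z-1-a}{s-z+1}}\,|\nabla\varphi|^{\frac{(s-a)z}{s-z+1}}\,\varphi^{\,b-\frac{(s-a)z}{s-z+1}} .
\end{align*}
Since $b>\frac{ps}{s-p+1}\ge\frac{(s-a)z}{s-z+1}$, the power of $\varphi$ is nonnegative, so $\varphi^{\cdots}\le1$. Also $a^{-(z-1)(s-a)/(s-z+1)}\le a^{-s(z-1)/(s-z+1)}$ because $a<1$. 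This produces exactly the right-hand side of (2.2).

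The main obstacle is that the $V$ term on the left is $Vu^s(u+\delta)^{-a}$, whereas the absorbed quantity is $V(u+\delta)^{s-a}$, and these do not match when $\delta>0$. I would handle this as follows.
\begin{itemize}
\item Apply the second Young directly with $u^{z-1-a}$ in place of $(u+\delta)^{z-1-a}$. This uses $(u+\delta)^{z-1-a}\le C(u^{z-1-a}+\delta^{z-1-a})$.
\item The resulting $\delta$-error is $C\delta^{z-1-a}\int|\nabla\varphi|^z$, which tends to $0$ as $\delta\to0$.
\item For the absorbed term, note $u^{s-a}\le u^s(u+\delta)^{-a}$ when $u>0$.
\end{itemize}
Then I let $\delta\to0$ using monotone convergence on the left-hand terms and Fatou where needed, interpreting the gradient terms on $\{u=0\}$ via $\nabla u=0$ a.e. there. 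Collecting the coefficients leaves $\tfrac a2$ on the gradient terms and at least $\tfrac12$ on the $V$ term, which gives (2.2).
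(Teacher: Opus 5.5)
Apart from the final absorption, your argument is the paper's: the same test function $u^{-a}\varphi^b$, then Young with exponents $(\frac{z}{z-1},z)$ and $(\frac{s-a}{z-1-a},\frac{s-a}{s-z+1})$. Your explicit $\delta$-regularization also addresses a point the paper only waves at. Its ``replace $u$ by $u+\eta$'' ignores that $u+\eta$ does not satisfy (1.12) with $V(u+\eta)^s$.

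However, the inequality you use to resolve this mismatch is reversed. For $u>0$,
\begin{align*}
u^s(u+\delta)^{-a}=u^{s-a}\Big(\frac{u}{u+\delta}\Big)^{a}<u^{s-a}.
\end{align*}
So at fixed $\delta>0$, the term $\frac12\int_M Vu^{s-a}\varphi^b\,d\mu$ produced by your second Young inequality is larger than $\frac12\int_M Vu^s(u+\delta)^{-a}\varphi^b\,d\mu$. Pointwise it is larger by the factor $((u+\delta)/u)^a$, which is unbounded where $u\ll\delta$. Subtracting it from the left-hand $V$-term does not leave a positive multiple of that term; the difference can even be negative. As written, the step that is supposed to handle your ``main obstacle'' fails.

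The repair is to swap the order of the last two operations. At fixed $\delta$, keep
\begin{align*}
\frac a2\sum_{z\in\{p,q\}}\int_M|\nabla u|^z(u+\delta)^{-1-a}\varphi^b\,d\mu+\int_M Vu^s(u+\delta)^{-a}\varphi^b\,d\mu\le\frac12\int_M Vu^{s-a}\varphi^b\,d\mu+\mathcal{R}+E_\delta,
\end{align*}
where $\mathcal{R}$ is the right-hand side of (2.2) and $E_\delta=C\sum_{z}a^{1-z}\delta^{z-1-a}\int_M|\nabla\varphi|^z\,d\mu\to0$.

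Let $\delta\downarrow0$ first. On $\{u>0\}$ we have $(u+\delta)^{-a}\uparrow u^{-a}$ and $(u+\delta)^{-1-a}\uparrow u^{-1-a}$. On $\{u=0\}$ both integrands vanish, since there $u^s=0$ and $\nabla u=0$ a.e. By monotone convergence the left side becomes
\begin{align*}
\frac a2\sum_z\int_M|\nabla u|^zu^{-1-a}\varphi^b\,d\mu+\int_M Vu^{s-a}\varphi^b\,d\mu .
\end{align*}

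Only then subtract $\frac12\int_M Vu^{s-a}\varphi^b\,d\mu$. This requires that integral to be finite, which you never mention and should state. It holds because $u^{s-a}\le 1+u^s$, $V\in L^1(M)$, and $\int_M Vu^s\varphi^b\,d\mu<\infty$ by testing (2.1) with $\psi=\varphi^b$. This yields (2.2) with coefficient exactly $\frac12$ on the $V$-term.
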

           \begin{proof}
               Without loss of generality, we assume that $u$ is positive with 
               $u^{-1}\in L^{\infty}_{loc}(M)$. Otherwise, we define $u_\eta=u+\eta$ for any $\eta>0$ and then let $\eta\xrightarrow{} 0$.  Let $\psi=u^{-a}\varphi^b$, then
               \begin{align}
                   \nabla\psi=-au^{-a-1}\varphi^b\nabla u+bu^{-a}\varphi^{b-1}\nabla \varphi\quad a.e.\ on\ M.
               \end{align}
               Combining with (2.1), we have 
               \begin{align}
                   &a\int_M |\nabla u|^pu^{-1-a}\varphi^b d\mu+a\int_M |\nabla u|^q u^{-1-a}\varphi^b d\mu+\int_M Vu^{s-a}\varphi^b d\mu\nonumber
                   \\
                   \leq &b\int_M|\nabla u|^{p-2}u^{-a}\varphi^b\langle \nabla u,\nabla \varphi\rangle d\mu+b\int_M|\nabla u|^{q-2}u^{-a}\varphi^b\langle \nabla u,\nabla \varphi\rangle d\mu.
               \end{align}
               By the Young's inequality with the pair $(\frac{p-1}{p},\frac{1}{p})$, i.e.,
               \begin{align*}
                   XY\leq X^{\frac{p}{p-1}}+Y^{p}, \quad \forall X,Y>0,
               \end{align*}
               we obtain
               \begin{align}
                   &b\int_M|\nabla u|^{p-2}u^{-a}\varphi^b\langle \nabla u,\nabla \varphi\rangle d\mu\leq b\int_M|\nabla u|^{p-1}u^{-a}\varphi^b|\nabla \varphi| d\mu \nonumber
                   \\
                   = &\int_M\left[|\nabla u|^{p-1}\left(\frac{a}{2}\right)^{\frac{p-1}{p}}\varphi^{\frac{p-1}{p}}u^{-(a+1)\frac{p-1}{p}} \right] \cdot \left[b \left(\frac{a}{2}\right)^{-\frac{p-1}{p}}\varphi^{\frac{b}{p}-1}u^{1-\frac{a+1}{p}} |\nabla \varphi| \right]d\mu \nonumber
                   \\
                   \leq&  \frac{a}{2}\int_M |\nabla u|^pu^{-1-a}\varphi^b d\mu+b^p\left(\frac{a}{2}\right)^{-(p-1)}\int_M\varphi^{b-p}u^{p-a-1}|\nabla \varphi|^pd\mu.
               \end{align}
               Similarly, we have
               \begin{align}
                  & b\int_M|\nabla u|^{q-2}u^{-a}\varphi^b\langle \nabla u,\nabla \varphi\rangle d\mu
                  \nonumber\\
                  \leq &\frac{a}{2}\int_M |\nabla u|^qu^{-1-a}\varphi^b d\mu+b^q\left(\frac{a}{2}\right)^{-(q-1)}\int_M\varphi^{b-q}u^{q-a-1}|\nabla \varphi|^qd\mu.
               \end{align}
               Substituting (2.5) and (2.6) into (2.4), we have
               \begin{align}
                  & \frac{a}{2}\int_M |\nabla u|^pu^{-1-a}\varphi^b d\mu+\frac{a}{2}\int_M |\nabla u|^q u^{-1-a}\varphi^b d\mu+\int_M Vu^{s-a}\varphi^b d\mu
                    \nonumber\\
                   \leq &b^p\left(\frac{a}{2}\right)^{-(p-1)}\int_M\varphi^{b-p}u^{p-a-1}|\nabla \varphi|^pd\mu   +   b^q\left(\frac{a}{2}\right)^{-(q-1)}\int_M\varphi^{b-q}u^{q-a-1}|\nabla \varphi|^qd\mu  .
               \end{align}
               Applying Young's inequality with the pair $(\frac{p-a-1}{s-a},\frac{s-p+1}{s-a})$, we obtain
               \begin{align}
                   &b^p\left(\frac{a}{2}\right)^{-(p-1)}\int_M\varphi^{b-p}u^{p-a-1}|\nabla \varphi|^pd\mu
                   \nonumber\\
                   \leq& \int_M\left[u^{p-a-1}\left(\frac{1}{4}\right)^{\frac{p-a-1}{s-a}}V^{\frac{p-a-1}{s-a}}\varphi^{b\frac{p-a-1}{s-a}}  \right] 
                   \nonumber\\
                   &\cdot \left[\left(\frac{1}{4}\right)^{-\frac{p-a-1}{s-a}}b^p\left(\frac{a}{2}\right)^{-(p-1)}\varphi^{b\frac{s-p+1}{s-a}-p}V^{-\frac{p-a-1}{s-a}}|\nabla\varphi|^p  \right] d\mu
                   \nonumber\\
                   \leq &\frac{1}{4}\int_M Vu^{s-a}\varphi^bd\mu+Ca^{-(p-1)\frac{s-a}{s-p+1}}\int_MV^{-\frac{p-a-1}{s-p+1}}|\nabla \varphi|^{p\frac{s-a}{s-p+1}}\varphi^{b-\frac{(s-a)p}{s-p+1}}d\mu 
                   \nonumber\\
                   \leq &\frac{1}{4}\int_M Vu^{s-a}\varphi^bd\mu+Ca^{-(p-1)\frac{s-a}{s-p+1}}\int_MV^{-\frac{p-a-1}{s-p+1}}|\nabla \varphi|^{p\frac{s-a}{s-p+1}}d\mu.
               \end{align}
               The last inequality holds since $b>\frac{ps}{s-p+1}$ and $0\leq \varphi\leq 1$. In a similar way, we have
               \begin{align}
                   &b^q\left(\frac{a}{2}\right)^{-(q-1)}\int_M\varphi^{b-q}u^{q-a-1}|\nabla \varphi|^pd\mu
                   \nonumber\\
                   \leq &\frac{1}{4}\int_M Vu^{s-a}\varphi^bd\mu+Ca^{-(q-1)\frac{s-a}{s-q+1}}\int_MV^{-\frac{q-a-1}{s-q+1}}|\nabla \varphi|^{q\frac{s-a}{s-q+1}}d\mu.
               \end{align}
              Substituting (2.8) and (2.9) into (2.7), we obtain (2.2).
           \end{proof}
           \begin{lemma}
               Let $b>\frac{2ps}{s-p+1}$ and $a\in (0,\mathrm{min}\{1,q-1,\frac{s-p+1}{2(p-1)}\})$. Assume that $u$ is a nonnegative weak solution of (1.12), then there exists a constant $C>0$ such that for every function $\varphi\in Lip(M)$ with compact support and $0\leq \varphi\leq 1$, we have
               \begin{align}
                   \int_MVu^s\varphi^bd\mu
                   \leq& C(a^{-1}Q)^{\frac{p-1}{p}}\cdot \left(\int_{M\setminus K}Vu^s\varphi^bd\mu \right)^{\frac{(a+1)(p-1)}{sp}}
                   \nonumber\\
                   &\cdot\left(\int_{M\setminus K}V^{-\frac{(a+1)(p-1)}{s-(p-1)(a+1)}}|\nabla\varphi|^{\frac{ps}{s-(a+1)(p-1)}} d\mu\right)^{\frac{s-(a+1)(p-1)}{sp}}
                   \nonumber\\
                    &+C(a^{-1}Q)^{\frac{q-1}{q}}\cdot \left(\int_{M\setminus K}Vu^s\varphi^bd\mu \right)^{\frac{(a+1)(q-1)}{sq}}
                   \nonumber\\
                   &\cdot\left(\int_{M\setminus K}V^{-\frac{(a+1)(q-1)}{s-(q-1)(a+1)}}|\nabla\varphi|^{\frac{qs}{s-(a+1)(q-1)}} d\mu\right)^{\frac{s-(a+1)(q-1)}{sq}},
               \end{align}
               where $K=\{x\in M:\varphi(x)=1\}$ and 
               \begin{align*}
                   Q=a^{-\frac{s(p-1)}{s-p+1}}\int_M V^{-\frac{p-a-1}{s-p+1}} |\nabla \varphi|^{\frac{(s-a)p}{s-p+1}}d\mu+ a^{-\frac{s(q-1)}{s-q+1}}\int_M V^{-\frac{q-a-1}{s-q+1}} |\nabla \varphi|^{\frac{(s-a)q}{s-q+1}}d\mu.
               \end{align*}
           \end{lemma}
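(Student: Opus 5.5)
Test (2.1) with $\psi=\varphi^b$, which is admissible since $\varphi$ is Lipschitz with compact support and $0\le\varphi\le1$. This gives
\begin{align*}
\int_M Vu^s\varphi^b\,d\mu\le b\int_M|\nabla u|^{p-1}\varphi^{b-1}|\nabla\varphi|\,d\mu+b\int_M|\nabla u|^{q-1}\varphi^{b-1}|\nabla\varphi|\,d\mu .
\end{align*}
Since $\nabla\varphi=0$ a.e. on $K$, both integrals on the right may be taken over $M\setminus K$. I will treat the $p$-term; the $q$-term is identical with $p$ replaced by $q$.

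For the $p$-term, split the integrand as
\begin{align*}
|\nabla u|^{p-1}\varphi^{b-1}|\nabla\varphi|=\Big(|\nabla u|^{p-1}u^{-\frac{(a+1)(p-1)}{p}}\varphi^{\frac{b(p-1)}{p}}\Big)\cdot\Big(u^{\frac{(a+1)(p-1)}{p}}\varphi^{\frac{b}{p}-1}|\nabla\varphi|\Big)
\end{align*}
and apply Hölder with exponents $\frac{p}{p-1}$ and $p$. The first factor gives $\left(\int_M|\nabla u|^pu^{-1-a}\varphi^b\,d\mu\right)^{\frac{p-1}{p}}$. By Lemma 2.2 this is bounded by $(Ca^{-1}Q)^{\frac{p-1}{p}}$, because the left side of (2.2) controls $\frac a2\int|\nabla u|^pu^{-1-a}\varphi^b$ and the right side of (2.2) is exactly $CQ$.

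The second factor is $\left(\int_{M\setminus K}u^{(a+1)(p-1)}\varphi^{b-p}|\nabla\varphi|^p\,d\mu\right)^{1/p}$. Apply Hölder again with exponents $\frac{s}{(a+1)(p-1)}$ and $\frac{s}{s-(a+1)(p-1)}$, inserting $V^{\frac{(a+1)(p-1)}{s}}\cdot V^{-\frac{(a+1)(p-1)}{s}}$. The assumption $a<\frac{s-p+1}{2(p-1)}$ guarantees $(a+1)(p-1)<s$, so these exponents are admissible. This gives
\begin{align*}
\left(\int_{M\setminus K}Vu^s\varphi^{b}\,d\mu\right)^{\frac{(a+1)(p-1)}{s}}\left(\int_{M\setminus K}V^{-\frac{(a+1)(p-1)}{s-(a+1)(p-1)}}|\nabla\varphi|^{\frac{ps}{s-(a+1)(p-1)}}\varphi^{\gamma}\,d\mu\right)^{\frac{s-(a+1)(p-1)}{s}},
\end{align*}
where the power of $\varphi$ in the first integral is $b$ by construction. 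The leftover power is
\begin{align*}
\gamma=\Big(b-p-\frac{b(a+1)(p-1)}{s}\Big)\frac{s}{s-(a+1)(p-1)}=b-\frac{ps}{s-(a+1)(p-1)} .
\end{align*}
Since $a<\frac{s-p+1}{2(p-1)}$, we have $s-(a+1)(p-1)>\frac{s-p+1}{2}$, and hence $\frac{ps}{s-(a+1)(p-1)}<\frac{2ps}{s-p+1}<b$. So $\gamma\ge0$ and $\varphi^\gamma\le1$ can be dropped. Taking the $1/p$ power of this estimate gives the exponents $\frac{(a+1)(p-1)}{sp}$ and $\frac{s-(a+1)(p-1)}{sp}$ stated in (2.10).

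The same computation for $q$ requires $(a+1)(q-1)<s$ and $b>\frac{qs}{s-(a+1)(q-1)}$. Both follow from $q\le p$ and the corresponding $p$-conditions, because $\frac{zs}{s-(a+1)(z-1)}$ is increasing in $z$. Adding the two estimates yields (2.10).

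The main obstacle is the bookkeeping of the powers of $\varphi$: the split must leave exactly $\varphi^b$ inside the $Vu^s$ integral and a nonnegative residual power $\gamma$, and this is precisely where the stronger hypotheses $b>\frac{2ps}{s-p+1}$ and $a<\frac{s-p+1}{2(p-1)}$ are used. The positivity reduction $u\mapsto u+\eta$ from the proof of Lemma 2.2 handles the singular weights $u^{-1-a}$.
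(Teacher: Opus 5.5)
Your proposal is correct and follows essentially the same route as the paper. You test with $\varphi^b$, then apply H\"older with exponents $\frac{p}{p-1},p$ after inserting $u^{\mp(a+1)(p-1)/p}$, and bound the gradient factor by Lemma 2.2. You then use a $V$-weighted H\"older with exponents $\frac{s}{(a+1)(p-1)}$ and $\frac{s}{s-(a+1)(p-1)}$ and discard the residual factor $\varphi^{b-\frac{ps}{s-(a+1)(p-1)}}\le 1$. Your explicit checks fill in what the paper leaves implicit: that this residual exponent is nonnegative, and that the $q$-conditions follow from the $p$-conditions because $z\mapsto \frac{zs}{s-(a+1)(z-1)}$ is monotone.
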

           \begin{proof}
               Without loss of generality, we assume that $u$ is positive with 
               $u^{-1}\in L^{\infty}_{loc}(M)$. Taking $\psi=\varphi^b$ in (2.1), we have
               \begin{align}
                   \int_MVu^s\varphi ^b d\mu \leq b\int_M|\nabla u|^{p-2}\varphi^{b-1}\langle \nabla u,\nabla \varphi\rangle d\mu +b\int_M|\nabla u|^{q-2}\varphi^{b-1}\langle \nabla u,\nabla \varphi\rangle d\mu.
               \end{align}
               Using H$\ddot{\mathrm{o}}$lder inequality, we obtain
               \begin{align}
                   &b\int_M|\nabla u|^{p-2}\varphi^{b-1}\langle \nabla u,\nabla \varphi\rangle d\mu
                   \nonumber\\
                   \leq& b\int_M|\nabla u|^{p-1}\varphi^{b-1}|\nabla \varphi|d\mu 
                   \nonumber\\
                   =&b\int_M\left(|\nabla u|^{p-1}\varphi^{b\frac{p-1}{p}}u^{-(a+1)\frac{p-1}{p}} \right)\cdot \left( \varphi^{\frac{b}{p}-1}u^{(a+1)\frac{p-1}{p}}|\nabla \varphi |\right)d\mu
                   \nonumber\\
                   \leq & b\left(\int_M|\nabla u|^{p}\varphi^{b}u^{-a-1}d\mu \right)^{\frac{p-1}{p}}\cdot \left(\int_M \varphi^{b-p}u^{(a+1)(p-1)}|\nabla \varphi |^p d\mu \right)^{\frac{1}{p}}.
               \end{align}
               Similarly, we have
               \begin{align}
                   &b\int_M|\nabla u|^{q-2}\varphi^{b-1}\langle \nabla u,\nabla \varphi\rangle d\mu
                   \nonumber\\
                   \leq & b\left(\int_M|\nabla u|^{q}\varphi^{b}u^{-a-1}d\mu \right)^{\frac{q-1}{q}}\cdot \left(\int_M \varphi^{b-q}u^{(a+1)(q-1)}|\nabla \varphi |^q d\mu \right)^{\frac{1}{q}}.
               \end{align}
               From Lemma 2.2 and the definition of $Q$, we derive
               \begin{align}
                   \int_M |\nabla u|^pu^{-1-a}\varphi^b d\mu+\int_M |\nabla u|^q u^{-1-a}\varphi^b d\mu
                   \leq C a^{-1}Q.
               \end{align}
               From (2.11)-(2.14), we obtain
               \begin{align}
                   \int_MVu^s\varphi ^bd\mu\leq& C(a^{-1}Q)^{\frac{p-1}{p}}\cdot \left(\int_M \varphi^{b-p}u^{(a+1)(p-1)}|\nabla \varphi |^p d\mu \right)^{\frac{1}{p}}
                   \nonumber\\
                   &+C(a^{-1}Q)^{\frac{q-1}{q}}\cdot \left(\int_M \varphi^{b-q}u^{(a+1)(q-1)}|\nabla \varphi |^q d\mu \right)^{\frac{1}{q}}.
               \end{align}
               Applying H$\ddot{\mathrm{o}}$lder inequality again, we find that
               \begin{align}
                   &\int_M \varphi^{b-p}u^{(a+1)(p-1)}|\nabla \varphi |^p d\mu
                   \nonumber\\
                   =&\int_Mu^{(a+1)(p-1)}\varphi^{\frac{(a+1)(p-1)}{s}b}V^{\frac{(a+1)(p-1)}{s}} \cdot V^{-\frac{(a+1)(p-1)}{s}}\varphi^{b\frac{s-(p-1)(a+1)}{s}-p}|\nabla \varphi|^pd\mu
                   \nonumber\\
                   \leq& \left(\int_{M\setminus K}V^{-\frac{(a+1)(p-1)}{s-(p-1)(a+1)}}\varphi^{b-\frac{s}{s-(a+1)(p-1)}p}|\nabla \varphi|^{\frac{ps}{s-(a+1)(p-1)}}d\mu \right)^{\frac{s-(a+1)(p-1)}{s}}
                   \nonumber\\
                   &\cdot \left(\int_{M\setminus K}Vu^s\varphi^b d\mu\right)^{\frac{(a+1)(p-1)}{s}}.
               \end{align}
               Similarly, we have 
               \begin{align}
                   &\int_M \varphi^{b-q}u^{(a+1)(q-1)}|\nabla \varphi |^q d\mu
                   \nonumber\\
                   \leq& \left(\int_{M\setminus K}V^{-\frac{(a+1)(q-1)}{s-(q-1)(a+1)}}\varphi^{b-\frac{s}{s-(a+1)(q-1)}q}|\nabla \varphi|^{\frac{s}{s-(a+1)(q-1)}q}d\mu \right)^{\frac{s-(a+1)(q-1)}{s}}
                   \nonumber\\
                   &\cdot \left(\int_{M\setminus K}Vu^s\varphi^b d\mu\right)^{\frac{(a+1)(q-1)}{s}}.
               \end{align}
               Substituting (2.16) and (2.17) into (2.15), we obtain
               \begin{align*}
                   \int_MVu^s\varphi ^bd\mu\leq& C(a^{-1}Q)^{\frac{p-1}{p}}\cdot \left(\int_{M\setminus K}Vu^s\varphi^b d\mu\right)^{\frac{(a+1)(p-1)}{sp}}
                   \nonumber\\
                   &\cdot \left(\int_{M\setminus K}V^{-\frac{(a+1)(p-1)}{s-(p-1)(a+1)}}\varphi^{b-\frac{s}{s-(a+1)(p-1)}p}|\nabla \varphi|^{\frac{ps}{s-(a+1)(p-1)}}d\mu \right)^{\frac{s-(a+1)(p-1)}{sp}}
                   \\
                   &+C(a^{-1}Q)^{\frac{q-1}{q}}\cdot \left(\int_{M\setminus K}Vu^s\varphi^b d\mu\right)^{\frac{(a+1)(q-1)}{sq}}
                   \\
                   \cdot & \left(\int_{M\setminus K}V^{-\frac{(a+1)(q-1)}{s-(q-1)(a+1)}}\varphi^{b-\frac{s}{s-(a+1)(q-1)}q}|\nabla \varphi|^{ \frac{qs}{s-(a+1)(q-1)}}d\mu \right)^{\frac{s-(a+1)(q-1)}{sq}}
               \end{align*}
               Since $0\leq \varphi\leq 1$ and $b>\frac{2ps}{s-p+1}$, we obtain (2.10)
           \end{proof}
           At the end of the section, we give the following integral results under the condition HP1, HP2 and HP3.
           \begin{lemma}
                Let $f$ be a nonnegative decreasing function on $\mathbb{R_+} $, then for large enough $R$, we have
               \\
               (1) If HP1 holds, then 
               \begin{align}
                   \int_{M\setminus B_R} f(r(x))V^{-\bar{k}_z+\epsilon}d\mu \leq C\int_{\frac{R}{2}}^\infty f(r)r^{s_z+C_0\epsilon-1}(\mathrm{log}\ r)^{k_z}dr.
               \end{align}
                \\
               (2) If HP2 holds, then 
               \begin{align}
                   &\int_{M\setminus B_R} f(r(x))V^{-\bar{k}_z+\epsilon}d\mu \leq C\int_{\frac{R}{2}}^\infty f(r)r^{s_z+C_0\epsilon-1}(\mathrm{log}\ r)^{\bar{k}_z}dr
                   \\
                   &\int_{M\setminus B_R} f(r(x))V^{-\bar{k}_z-\epsilon}d\mu \leq C\int_{\frac{R}{2}}^\infty f(r)r^{s_z+C_0\epsilon-1}(\mathrm{log}\ r)^{\bar{k}_z}dr
               \end{align}
                \\
               (3) If HP3 holds, then 
               \begin{align}
                   \int_{M\setminus B_R} f(r(x))V^{-\bar{k}_z+\epsilon}d\mu \leq C\int_{\frac{R}{2}}^\infty f(r)r^{s_z+C_0\epsilon-1}(\mathrm{log}\ r)^{k}e^{-\epsilon\theta(\mathrm{log}r)^{\tau_z}}dr.
               \end{align}
           \end{lemma}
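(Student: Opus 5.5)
Decompose $M\setminus B_R$ into dyadic annuli and use the monotonicity of $f$ on each one. Fix $R$ large enough that the relevant growth condition (HP1, HP2 or HP3) holds for every radius $\rho\ge R$. Write
\begin{align*}
M\setminus B_R\subset\bigcup_{j\ge 1}\big(B_{2^jR}\setminus B_{2^{j-1}R}\big) .
\end{align*}
On the $j$-th annulus $A_j=B_{2^jR}\setminus B_{2^{j-1}R}$ we have $r(x)\ge 2^{j-1}R$. Since $f$ is nonnegative and decreasing, $f(r(x))\le f(2^{j-1}R)$ there. Hence
\begin{align*}
\int_{M\setminus B_R} f(r(x))V^{-\bar k_z+\epsilon}d\mu\le \sum_{j\ge1} f(2^{j-1}R)\int_{A_j}V^{-\bar k_z+\epsilon}d\mu .
\end{align*}
Next apply the hypothesis with radius $2^jR$. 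Under HP1, (1.5) gives
\begin{align*}
\int_{A_j}V^{-\bar k_z+\epsilon}d\mu\le C(2^jR)^{s_z+C_0\epsilon}\big(\log (2^jR)\big)^{k_z} .
\end{align*}

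The remaining step is to turn the sum back into an integral, one dyadic block at a time. For $r\in[2^{j-2}R,2^{j-1}R]$ we have $f(r)\ge f(2^{j-1}R)$, again because $f$ is decreasing. The weight $r^{s_z+C_0\epsilon-1}(\log r)^{k_z}$ is comparable on this block to $(2^jR)^{s_z+C_0\epsilon-1}(\log 2^jR)^{k_z}$. The power factor costs a constant of size at most $4^{|s_z+C_0\epsilon-1|}$, which is uniform for $\epsilon$ small. The logarithm factor is handled by $\log(2^{j-2}R)\ge \tfrac12\log(2^jR)$, valid once $R\ge 16$. The block has length $2^{j-2}R\simeq 2^jR$. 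Therefore
\begin{align*}
f(2^{j-1}R)(2^jR)^{s_z+C_0\epsilon}(\log 2^jR)^{k_z}\le C\int_{2^{j-2}R}^{2^{j-1}R}f(r)r^{s_z+C_0\epsilon-1}(\log r)^{k_z}dr .
\end{align*}
Summing over $j\ge1$, the blocks tile $[R/2,\infty)$ exactly, and (2.18) follows.

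Parts (2) and (3) follow the same scheme with the corresponding annulus bounds. For (2), use (1.6) and (1.7) respectively, with exponent $\bar k_z$ on the logarithm in place of $k_z$. For (3), use (1.8), where the only new ingredient is the factor $e^{-\epsilon\theta(\log r)^{\tau_z}}$. This factor is decreasing in $r$, so the comparison must go in the right direction. The annulus bound carries $e^{-\epsilon\theta(\log 2^jR)^{\tau_z}}$, evaluated at the larger radius $2^jR$. The integral on the block involves values of $r\le 2^{j-1}R$, where the factor is larger. So the inequality points the favorable way and no constant is lost; I have not checked the reverse direction, since it is not needed.

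I expect the main obstacle to be the uniformity of the constant $C$. It must not depend on $j$ or $R$, and it must be uniform in $\epsilon$ for $\epsilon$ small. The $\epsilon$-uniformity matters because later the paper takes $\epsilon\to0$ or ties $\epsilon$ to $R$. This uniformity comes from three facts: $\epsilon$ ranges over a bounded set, the constant in the HP conditions is uniform by hypothesis, and the logarithm comparison holds once $R$ exceeds a fixed threshold.
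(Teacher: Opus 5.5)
Your proof is correct and follows the same route as the paper. Both arguments decompose $M\setminus B_R$ into dyadic annuli, bound $f(r(x))$ on each annulus by its value at the inner radius, and apply the HP estimate at the outer radius; each term is then compared, by monotonicity of $f$, with the integral over the preceding block $[2^{j-2}R,2^{j-1}R]$, and these blocks tile $[R/2,\infty)$. Your explicit handling of the logarithmic factor via $\log(2^{j-2}R)\ge\frac12\log(2^jR)$, and your check that the decreasing factor $e^{-\epsilon\theta(\log r)^{\tau_z}}$ in HP3 enters favorably, supply details that the paper's computation glosses over.
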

           \begin{proof}
               Since the proof is similar, we only prove (2.18). By the monotonicity of $f$, we have
               \begin{align*}
                   \int_{M\setminus B_R} f(r(x))V^{-\bar{k}_z+\epsilon}d\mu=&\sum\limits_{i=0}^\infty\int_{B_{2^{i+1}R}\setminus B_{2^{i}R}}f(r(x))V^{-\bar{k}_z+\epsilon}d\mu
                   \\
                   \leq & \sum\limits_{i=0}^\infty f(2^{i}R)\int_{B_{2^{i+1}R}\setminus B_{2^{i}R}}V^{-\bar{k}_z+\epsilon}d\mu
                   \\
                   \leq & C\sum\limits_{i=0}^\infty f(2^{i}R)(2^{i}R)^{s_z+C_0\epsilon}(\mathrm{log} (2^{i}R))^{k_z}
                   \\
                   \leq &C \sum\limits_{i=0}^\infty f(2^{i}R)(2^{i-1}R)^{s_z+C_0\epsilon-1}(\mathrm{log} (2^{i-1}R))^{k_z} \cdot (2^{i-1}R)
                   \\
                   \leq &C\int_{\frac{R}{2}}^\infty f(r)r^{s_z+C_0\epsilon-1}(\mathrm{log}R)^{k_z}dr.
               \end{align*}
           \end{proof}

           \section{Proof of Theorem 1.5}
           In this section, we prove the Theorem 1.5 by using Lemma 2.2.
           
           ~~\\
         $\mathit{Proof\ of\ Theorem\ 1.5.}$ For $R>0$ large enough, let $a=\frac{1}{\mathrm{log R}}$. Define $\varphi_n=\varphi(x)\eta_n(x)$ for $n\in \mathbb{N}$, where
         \begin{align*}
    \varphi(x) &= 
    \begin{cases}
        1, & \text{if }r(x)\leq R \\
        \left(\frac{r(x)}{R}\right)^{-C_1a},  & \text{if } r(x)>R
    \end{cases} \\
\end{align*}
    and 
         \begin{align*}
    \eta_n(x) &= 
    \begin{cases}
        1, & \text{if }r(x)< nR \\
        2-\frac{r(x)}{nR},  & \text{if } nR\leq r(x)< 2nR
        \\
        0,  & \text{if } r(x)\geq 2nR.
    \end{cases} \\
\end{align*}
Here we assume that $C_1>\frac{C_0+q+2}{qs}$ with $C_0$ as in HP1. Notice that 
\begin{align}
    |\nabla \varphi_n|^\alpha\leq C(|\nabla \varphi|^\alpha+\varphi^\alpha|\nabla\eta_n|^\alpha) 
\end{align}
for $\alpha\geq 1$. Now we use $\varphi_n$ as the test function in Lemma 2.2 and assume that $a,b,C$ are defined as in Lemma 2.2. Then we have
        \begin{align}
                  &\int_MVu^{s-a}\varphi_n^bd\mu 
                  \nonumber\\
                  \leq& C\left(a^{-\frac{s(p-1)}{s-p+1}}\int_M V^{-\frac{p-a-1}{s-p+1}} |\nabla \varphi_n|^{\frac{(s-a)p}{s-p+1}}d\mu+ a^{-\frac{s(q-1)}{s-q+1}}\int_M V^{-\frac{q-a-1}{s-q+1}} |\nabla \varphi_n|^{\frac{(s-a)q}{s-q+1}}d\mu \right)
                  \nonumber\\
                  \leq& Ca^{-\frac{s(p-1)}{s-p+1}}\left(\int_M V^{-\frac{p-a-1}{s-p+1}} |\nabla \varphi|^{\frac{(s-a)p}{s-p+1}}d\mu+\int_M V^{-\frac{p-a-1}{s-p+1}}\varphi^{\frac{p(s-a)}{s-p+1}} |\nabla \eta_n|^{\frac{(s-a)p}{s-p+1}}d\mu \right)
                  \nonumber\\
                  &+Ca^{-\frac{s(q-1)}{s-q+1}}\left(\int_M V^{-\frac{q-a-1}{s-q+1}} |\nabla \varphi|^{\frac{(s-a)q}{s-q+1}}d\mu+\int_M V^{-\frac{q-a-1}{s-q+1}}\varphi^{\frac{q(s-a)}{s-q+1}} |\nabla \eta_n|^{\frac{(s-a)q}{s-q+1}}d\mu \right)
                  \nonumber\\
                  =&Ca^{-\frac{s(p-1)}{s-p+1}}(I_{p,1}+I_{p,2})+Ca^{-\frac{s(q-1)}{s-q+1}}(I_{q,1}+I_{q,2}),
               \end{align}
               where 
               \begin{align*}
                   &I_{p,1}=\int_M V^{-\frac{p-a-1}{s-p+1}} |\nabla \varphi|^{\frac{(s-a)p}{s-p+1}}d\mu,\quad I_{p,2}=\int_M V^{-\frac{p-a-1}{s-p+1}}\varphi^{\frac{p(s-a)}{s-p+1}} |\nabla \eta_n|^{\frac{(s-a)p}{s-p+1}}d\mu,
                   \\
                   &I_{q,1}=\int_M V^{-\frac{q-a-1}{s-q+1}} |\nabla \varphi|^{\frac{(s-a)q}{s-q+1}}d\mu,\quad I_{q,2}=\int_M V^{-\frac{q-a-1}{s-q+1}}\varphi^{\frac{q(s-a)}{s-q+1}} |\nabla \eta_n|^{\frac{(s-a)q}{s-q+1}}d\mu.
               \end{align*}
               From (2.18) and the fact 
               \begin{align*}
                   R^a=R^{\frac{1}{\mathrm{log}R}}=e,\quad |\nabla\varphi|\leq CaR^{C_1a}r^{-C_1a-1},
               \end{align*}
               we obtain
               \begin{align*}
                   I_{p,1}\leq& \int_{M\setminus B_R} V^{-\frac{p-a-1}{s-p+1}} (CaR^{C_1a}r(x)^{-C_1a-1})^{\frac{(s-a)p}{s-p+1}}d\mu
                   \nonumber\\
                   \leq & Ca^{\frac{p(s-a)}{s-p+1}}\int_{M\setminus B_R} V^{-\frac{p-a-1}{s-p+1}} r(x)^{-(C_1a+1)\frac{(s-a)p}{s-p+1}}d\mu
                   \\
                   \leq & Ca^{\frac{p(s-a)}{s-p+1}}\int_{\frac{R}{2}}^\infty r^{ -(C_1a+1)\frac{p(s-a)}{s-p+1}+s_p+C_0\frac{a}{s-p+1}-1}(\mathrm{log}\ r)^{k_p}dr.
               \end{align*}
               Let $y=\alpha \mathrm{log}\ r$, where 
               \begin{align*}
                   \alpha=&(C_1a+1)\frac{p(s-a)}{s-p+1}-s_p-C_0\frac{a}{s-p+1}
                   \\
                   =&C_1a\frac{p(s-a)}{s-p+1}-\frac{a}{s-p+1}-C_0\frac{a}{s-p+1} .
               \end{align*}
               Since $ C_1>\frac{C_0+q+2}{qs}\geq \frac{C_0+p+2}{ps}$ and $a$ small enough, we have $\alpha \geq \frac{a}{s-p+1}>0$. Thus
               \begin{align*}
                   I_{p,1}\leq& Ca^{\frac{p(s-a)}{s-p+1}}\int_0^\infty e^{-y}y^{-k_p}\alpha^{-k_p-1}dy\leq Ca^{\frac{p(s-a)}{s-p+1}-k_p-1}.
               \end{align*}
               For $I_{p,2}$, by HP1 with $\epsilon=\frac{a}{s-p+1}$, we derive
               \begin{align*}
                   I_{p,2}\leq & \left(\sup_{B_{2nR}\setminus B_{nR}}\varphi\right)^{\frac{p(s-a)}{s-p+1}}\left(\frac{1}{nR}\right)^{\frac{p(s-a)}{s-p+1}}\int_{_{B_{2nR}\setminus B_{nR}}}V^{-\frac{p-1}{s-p+1}+\frac{a}{s-p+1}}d\mu 
                   \\
                   \leq & C n^{-C_1a\frac{p(s-a)}{s-p+1}}\left(\frac{1}{nR}\right)^{\frac{p(s-a)}{s-p+1}}(2nR)^{s_p+C_0\frac{a}{s-p+1} }(\mathrm{log}(2nR))^{k_p}
                   \\
                   \leq &C n^{-C_1a\frac{p(s-a)}{s-p+1}- \frac{p(s-a)}{s-p+1}+s_p+C_0\frac{a}{s-p+1}}(\mathrm{log}(2nR))^{k_p},
               \end{align*}
               where in the last equality follows from $R^a=e$. By the choice of $C_1$ and $a$ small enough, we have
               \begin{align*}
                   -C_1a\frac{p(s-a)}{s-p+1}- \frac{p(s-a)}{s-p+1}+s_p+C_0\frac{a}{s-p+1}\leq -\frac{a}{s-p+1}<0.
               \end{align*}
               Hence we have
               \begin{align*}
                   I_{p,2}\leq Cn^{-\frac{a}{s-p+1}}(\mathrm{log}(2nR))^{k_p}.
               \end{align*}
               Analogous to $I_{p,1}$ and $I_{p,2}$, we have
               \begin{align*}
                   I_{q,1}\leq Ca^{\frac{q(s-a)}{s-q+1}-k_q-1},\quad I_{q,2}\leq Cn^{-\frac{a}{s-q+1}}(\mathrm{log}(2nR))^{k_q}.
               \end{align*}
               From the estimates of $I_{p,1},I_{p,2}, I_{q,1},I_{q,2}$ and (3.2), we have
               \begin{align*}
                   \int_{B_R}Vu^{s-a}d\mu\leq& \int_MVu^{s-a}\varphi_n^bd\mu
                   \\
                   \leq & Ca^{-\frac{s(p-1)}{s-p+1}}(a^{\frac{p(s-a)}{s-p+1}-k_p-1}+n^{-\frac{a}{s-p+1}}(\mathrm{log}(2nR))^{k_p})
                   \\
                   &+Ca^{-\frac{s(q-1)}{s-q+1}}(a^{\frac{q(s-a)}{s-q+1}-k_q-1}+n^{-\frac{a}{s-q+1}}(\mathrm{log}(2nR))^{k_q}).
               \end{align*}
               Letting $n\xrightarrow{} \infty$, we obtain
               \begin{align*}
                   \int_{B_R}Vu^{s-a}d\mu \leq Ca^{\frac{p-1}{s-p+1}-k_p}+Ca^{\frac{q-1}{s-q+1}-k_q}.
               \end{align*}
            Note that $ k_z<\frac{z-1}{s-z+1}$ for $z\in \{p,q\}$. Letting $R\xrightarrow{}\infty$, then $a=\frac{1}{\mathrm{log}R}\xrightarrow{}0^+$. By Fatou's Lemma, we conclude that
            \begin{align*}
                \int_{M}Vu^{s}d\mu=0,
            \end{align*}
            which implies that $u=0$ a.e. on $M$.   {\qed}

            \section{Proof of Theorem 1.6}
            In this section, we assume that $C_1>\mathrm{max}\{\frac{C_0}{s-p+1},\frac{C_0+q+2}{qs}\} $. Since the proof relies on Lemma 2.3, we assume that $a,b$ satisfy the condition in Lemma 2.3.

          ~~\\
         $\mathit{Proof\ of\ Theorem\ 1.6.}$ For $R>0$ large enough, let $a=\frac{1}{\mathrm{log R}}$. We choose the same test function $\varphi_n=\varphi(x)\eta_n(x)$ as in Section 3. From Lemma 2.3, we have
         \begin{align}
                   \int_MVu^s\varphi_n^bd\mu
                   \leq& C(a^{-1}Q_n)^{\frac{p-1}{p}}\cdot \left(\int_{M\setminus K}Vu^s\varphi_n^bd\mu \right)^{\frac{(a+1)(p-1)}{sp}}
                   \nonumber\\
                   &\cdot\left(\int_{M\setminus K}V^{-\frac{(a+1)(p-1)}{s-(p-1)(a+1)}}|\nabla\varphi_n|^{\frac{ps}{s-(a+1)(p-1)}} d\mu\right)^{\frac{s-(a+1)(p-1)}{sp}}
                   \nonumber\\
                    &+C(a^{-1}Q_n)^{\frac{q-1}{q}}\cdot \left(\int_{M\setminus K}Vu^s\varphi_n^bd\mu \right)^{\frac{(a+1)(q-1)}{sq}}
                   \nonumber\\
                   &\cdot\left(\int_{M\setminus K}V^{-\frac{(a+1)(q-1)}{s-(q-1)(a+1)}}|\nabla\varphi_n|^{\frac{qs}{s-(a+1)(q-1)}} d\mu\right)^{\frac{s-(a+1)(q-1)}{sq}},
               \end{align}
               where $K=\{x\in M:\varphi_n(x)=1\}$ and 
               \begin{align*}
                   Q_n=a^{-\frac{s(p-1)}{s-p+1}}\int_M V^{-\frac{p-a-1}{s-p+1}} |\nabla \varphi_n|^{\frac{(s-a)p}{s-p+1}}d\mu+ a^{-\frac{s(q-1)}{s-q+1}}\int_M V^{-\frac{q-a-1}{s-q+1}} |\nabla \varphi_n|^{\frac{(s-a)q}{s-q+1}}d\mu.
               \end{align*}
               First, we estimate $Q_n$. Recalling the proof in Section 3, we actually estimate $Q_n$ under the condition HP1. Arguing as in Section 3 and using (2.19), with the only difference that $k_z$ is replaced by $\bar{k}_z$, we have
               \begin{align}
                   Q_n\leq& Ca^{-\frac{s(p-1)}{s-p+1}}(a^{\frac{p(s-a)}{s-p+1}-\bar{k}_p-1}+n^{-\frac{a}{s-p+1}}(\mathrm{log}(2nR))^{\bar{k}_p})
                   \nonumber\\
                   &+Ca^{-\frac{s(q-1)}{s-q+1}}(a^{\frac{q(s-a)}{s-q+1}-\bar{k}_q-1}+n^{-\frac{a}{s-q+1}}(\mathrm{log}(2nR))^{\bar{k}_q})
                   \nonumber\\
                   \leq & C(1+a^{-\frac{s(p-1)}{s-p+1}}n^{-\frac{a}{s-p+1}}(\mathrm{log}(2nR))^{\bar{k}_p}+a^{-\frac{s(q-1)}{s-q+1}}n^{-\frac{a}{s-q+1}}(\mathrm{log}(2nR))^{\bar{k}_q}).
               \end{align}
               Hence we only need to estimate 
               \begin{align*}
                   &J_p=\int_{M\setminus K}V^{-\frac{(a+1)(p-1)}{s-(p-1)(a+1)}}|\nabla\varphi_n|^{\frac{ps}{s-(a+1)(p-1)}} d\mu,
                   \\
                   &J_q=\int_{M\setminus K}V^{-\frac{(a+1)(q-1)}{s-(q-1)(a+1)}}|\nabla\varphi_n|^{\frac{qs}{s-(a+1)(q-1)}} d\mu.
               \end{align*}
               From (3.1), we have
               \begin{align*}
                   J_p\leq & C\int_M V^{-\frac{(a+1)(p-1)}{s-(p-1)(a+1)}}|\nabla\varphi|^{\frac{ps}{s-(a+1)(p-1)}} d\mu
                   \\
                   &+C\int_M V^{-\frac{(a+1)(p-1)}{s-(p-1)(a+1)}}\varphi^{\frac{ps}{s-(a+1)(p-1)}}|\nabla\eta_n|^{\frac{ps}{s-(a+1)(p-1)}} d\mu
                   \\
                   =&C(J_{p,1}+J_{p,2}).
               \end{align*}
               Define $\delta_p=\frac{(a+1)(p-1)}{s-(p-1)(a+1)}-\bar{k}_p$, then
               \begin{align*}
                   \frac{(p-1)sa}{(s-p+1)^2}\leq \delta_p=\frac{(p-1)sa}{(s-(a+1)(p-1))(s-p+1)}\leq \frac{2(p-1)sa}{(s-p+1)^2}
               \end{align*}
               for $a$ small enough and 
               \begin{align*}
                   \frac{ps}{s-(a+1)(p-1)}=s_p+p\delta_p.
               \end{align*}
               From (2.20), (3.1) and let $\epsilon=\delta_p$, we derive
               \begin{align*}
                   J_{p,1}=&\int_M V^{-\frac{(a+1)(p-1)}{s-(p-1)(a+1)}}|\nabla\varphi|^{\frac{ps}{s-(a+1)(p-1)}} d\mu
                   \\
                   \leq & \int_{M\setminus B_R}V^{-\bar{k}_p-\delta_p}(C_1aR^{C_1a}r(x)^{-C_1a-1})^{s_p+p\delta_p}d\mu
                   \\
                   \leq & C\int_{\frac{R}{2}}^\infty a^{s_p+p\delta_p }\cdot r^{-(C_1a+1)(s_p+p\delta_p)+s_p+C_0\delta_p-1}(\mathrm{log}\ r)^{\bar{k}_p}dr,
               \end{align*}
               where the last inequality follows from $R^a=e$. Let $y=\alpha \mathrm{log}\ r$, where
               \begin{align*}
                   -\alpha=-(C_1a+1)(s_p+p\delta_p)+s_p+C_0\delta_p.
               \end{align*}
               For small $a$, we have 
               \begin{align*}
                   -\alpha<-\frac{C_0as}{(s-p+1)^2}<0.
               \end{align*}
               Thus 
               \begin{align*}
                   J_{p,1}\leq Ca^{s_p+p\delta_p }\int_{\frac{R}{2}}^\infty e^{-y}y^{\bar{k}_p}\alpha^{-\bar{k}_p-1}dy\leq Ca^{s_p+p\delta_p-\bar{k}_p-1 }.
               \end{align*}
               In order to estimate $J_{p,2}$, we use (1.7) with $\epsilon=\delta_p$. Then we have
               \begin{align*}
                   J_{p,2}=&\int_M V^{-\frac{(a+1)(p-1)}{s-(p-1)(a+1)}}\varphi^{\frac{ps}{s-(a+1)(p-1)}}|\nabla\eta_n|^{\frac{ps}{s-(a+1)(p-1)}} d\mu
                   \\
                   \leq & \left(\sup\limits_{B_{2nR\setminus B_{nR}}}\varphi\right)^{s_p+p\delta_p}\left(\frac{1}{nR}\right)^{s_p+p\delta_p}\int_{B_{2nR\setminus B_{nR}} }V^{-\bar{k}_p-\delta_p}d\mu
                   \\
                   \leq &C n^{-C_1a(s_p+p\delta_p)}\left(\frac{1}{nR}\right)^{s_p+p\delta_p}(2nR)^{s_p+C_0\delta_p}(\mathrm{log}(2nR))^{\bar{k}_p}
                   \\
                   \leq & Cn^{ -(C_1a+1)(s_p+p\delta_p)+s_p+C_0\delta_p}(\mathrm{log}(2nR))^{\bar{k}_p},
               \end{align*}
               where the last equality follows from $R^a=e$. By the choice of $C_1$ and $a$ small enough, we have
               \begin{align*}
                   -(C_1a+1)(s_p+p\delta_p)+s_p+C_0\delta_p\leq -\frac{C_0sa}{(s-p+1)^2}<0.
               \end{align*}
               This yields that 
               \begin{align*}
                   J_{p,2}\leq Cn^{-\frac{C_0sa}{(s-p+1)^2} }(\mathrm{log}(2nR))^{\bar{k}_p}.
               \end{align*}
               From the estimate of $ J_{p,1}$ and $ J_{p,2}$, we obtain
               \begin{align}
                   J_p\leq C(a^{s_p+p\delta_p-\bar{k}_p-1 }+n^{-\frac{C_0sa}{(s-p+1)^2} }(\mathrm{log}(2nR))^{\bar{k}_p} ).
               \end{align}
               Similarly, we have
               \begin{align}
                   J_q\leq C(a^{s_q+q\delta_q-\bar{k}_q-1 }+n^{-\frac{C_0sa}{(s-q+1)^2} }(\mathrm{log}(2nR))^{\bar{k}_q} ),
               \end{align}
               where $\delta_q= \frac{(q-1)sa}{(s-(a+1)(q-1))(s-q+1)}$. Rewrite (4.1), we have
               \begin{align}
                   \int_MVu^s\varphi_n^bd\mu
                   \leq& C(a^{-1}Q_n)^{\frac{p-1}{p}}\cdot \left(\int_{M\setminus K}Vu^s\varphi_n^bd\mu \right)^{\frac{(a+1)(p-1)}{sp}}
                   \cdot J_p^{\frac{s-(a+1)(p-1)}{sp}}
                   \nonumber\\
                   &+C(a^{-1}Q_n)^{\frac{q-1}{q}}\cdot \left(\int_{M\setminus K}Vu^s\varphi_n^bd\mu \right)^{\frac{(a+1)(q-1)}{sq}}
                   \cdot J_q^{\frac{s-(a+1)(q-1)}{sq}}.
               \end{align}
               It is easy to see that there exists $\frac{(a+1)(p-1)}{sp}<\gamma<1$ such that
               \begin{align*}
                   1+\int_MVu^s\varphi_n^bd\mu
                   \leq& C[(a^{-1}Q_n)^{\frac{p-1}{p}}J_p^{\frac{s-(a+1)(p-1)}{sp}}+(a^{-1}Q_n)^{\frac{q-1}{q}}J_q^{\frac{s-(a+1)(q-1)}{sq}}+1]
                   \nonumber\\
                   &\cdot \left(1+\int_{M\setminus K}Vu^s\varphi_n^bd\mu\right)^\gamma.
               \end{align*}
               This implies that
               \begin{align}
                   &\left(1+\int_MVu^s\varphi_n^bd\mu\right)^{1-\gamma}
                   \nonumber\\
                   \leq& C[(a^{-1}Q_n)^{\frac{p-1}{p}}J_p^{\frac{s-(a+1)(p-1)}{sp}}+(a^{-1}Q_n)^{\frac{q-1}{q}}J_q^{\frac{s-(a+1)(q-1)}{sq}}+1].
               \end{align}
               Notice that, from (4.3), we have
               \begin{align*}
                   \limsup_{n\xrightarrow{}\infty} J_p\leq Ca^{s_p+p\delta_p-\bar{k}_p-1 }\leq C a^{\frac{(p-1)s}{s-(a+1)(p-1)}}.
               \end{align*}
               By a similar argument, we have
               \begin{align*}
                   \limsup_{n\xrightarrow{}\infty} Q_n \leq C, \quad  \limsup_{n\xrightarrow{}\infty} J_q\leq C a^{\frac{(q-1)s}{s-(a+1)(p-1)}}.
               \end{align*}
               This implies that 
               \begin{align*}
                   &\limsup_{n\xrightarrow{}\infty}[(a^{-1}Q_n)^{\frac{p-1}{p}}J_p^{\frac{s-(a+1)(p-1)}{sp}}+(a^{-1}Q_n)^{\frac{q-1}{q}}J_q^{\frac{s-(a+1)(q-1)}{sq}}+1]
                   \leq C.
               \end{align*}
                Hence, letting $n\xrightarrow{} \infty$ in (4.6), we obtain
               \begin{align*}
                   1+\int_MVu^s\varphi^bd\mu \leq C, 
               \end{align*}
               that is, 
               \begin{align*}
                   \int_MVu^s\varphi^bd\mu \leq C.
               \end{align*}
               Letting $n\xrightarrow{} \infty$ in (4.5), by the same argument, we have
               \begin{align*}
                   \int_{B_R}Vu^sd\mu
                   \leq& C \left(\int_{M\setminus B_R}Vu^s\varphi^bd\mu \right)^{\frac{(a+1)(p-1)}{sp}}+C \left(\int_{M\setminus B_R}Vu^s\varphi^bd\mu \right)^{\frac{(a+1)(q-1)}{sq}}.
               \end{align*}
               Letting $R\xrightarrow{}\infty$, we conclude 
               \begin{align*}
                   \int_{M}Vu^sd\mu=0,
               \end{align*}
               which implies that $u=0$ a.e. on $M$.   {\qed}

               \section{ Proof of Theorem 1.7}
~~\\
         $\mathit{Proof\ of\ Theorem\ 1.7.}$ For $R>0$ large enough, let $a=\frac{1}{\mathrm{log R}}$. Consider the same test function $\varphi_n$ as in Section 3 and assume that $C_1>\frac{C_0+q+2}{qs}$. Arguing as in the proof in Section 3, we have
         \begin{align}
                  &\int_MVu^{s-a}\varphi_n^bd\mu 
                  \nonumber\\
                  \leq& Ca^{-\frac{s(p-1)}{s-p+1}}\left(\int_M V^{-\frac{p-a-1}{s-p+1}} |\nabla \varphi|^{\frac{(s-a)p}{s-p+1}}d\mu+\int_M V^{-\frac{p-a-1}{s-p+1}}\varphi^{\frac{p(s-a)}{s-p+1}} |\nabla \eta_n|^{\frac{(s-a)p}{s-p+1}}d\mu \right)
                  \nonumber\\
                  &+Ca^{-\frac{s(q-1)}{s-q+1}}\left(\int_M V^{-\frac{q-a-1}{s-q+1}} |\nabla \varphi|^{\frac{(s-a)q}{s-q+1}}d\mu+\int_M V^{-\frac{q-a-1}{s-q+1}}\varphi^{\frac{q(s-a)}{s-q+1}} |\nabla \eta_n|^{\frac{(s-a)q}{s-q+1}}d\mu \right)
                  \nonumber\\
                  =&Ca^{-\frac{s(p-1)}{s-p+1}}(I_{p,1}+I_{p,2})+Ca^{-\frac{s(q-1)}{s-q+1}}(I_{q,1}+I_{q,2}).
               \end{align}
               Using (2.21) with $\epsilon=\frac{a}{s-p+1}$ and $R^a=e$, we have
               \begin{align*}
                   I_{p,1}\leq & C\int_{M\setminus B_R} V^{-\bar{k}_p+\epsilon}(ar(x)^{-C_1a-1})^{\frac{p(s-a)}{s-p+1}}d\mu\\
                   \leq & Ca^{\frac{p(s-a)}{s-p+1}}\int_{\frac{R}{2}}^\infty r^{-(C_1a+1)\frac{p(s-a)}{s-p+1}+s_p+C_0\frac{a}{s-p+1}-1}(\mathrm{log}\ r)^ke^{-\frac{a\theta}{s-p+1}(\mathrm{log}\ r)^{\tau_p}}dr. 
               \end{align*}
               By the choice of $C_1$, we have
               \begin{align*}
                   -(C_1a+1)\frac{p(s-a)}{s-p+1}+s_p+C_0\frac{a}{s-p+1}<0.
               \end{align*}
               Setting $ y=(\frac{a\theta}{s-p+1})^{\frac{1}{\tau_p}}\mathrm{log}\ r$, we have
               \begin{align*}
                   I_{p,1}\leq & C a^{\frac{p(s-a)}{s-p+1}}\int_0^\infty e^{-y^{\tau_p}}y^k\left(\frac{a\theta}{s-p+1}\right)^{-\frac{k+1}{\tau_p}}dy
                   \\
                   \leq &Ca^{\frac{p(s-a)}{s-p+1}-\frac{k+1}{\tau_p}}.
               \end{align*}
               As for $I_{p,2}$, by HP3 with $\epsilon=\frac{a}{s-p+1}$, we derive
               \begin{align*}
                   I_{p,2}\leq & \left(\sup_{B_{2nR}\setminus B_{nR}}\varphi\right)^{\frac{p(s-a)}{s-p+1}}\left(\frac{1}{nR}\right)^{\frac{p(s-a)}{s-p+1}}\int_{_{B_{2nR}\setminus B_{nR}}}V^{-\frac{p-1}{s-p+1}+\frac{a}{s-p+1}}d\mu 
                   \\
                   \leq & C n^{-C_1a\frac{p(s-a)}{s-p+1}}\left(\frac{1}{nR}\right)^{\frac{p(s-a)}{s-p+1}}(2nR)^{s_p+C_0\frac{a}{s-p+1} }(\mathrm{log}(2nR))^{k}e^{-\frac{a\theta}{s-p+1}(\mathrm{log}(2nR))^{\tau_p}}
                   \\
                   \leq &C n^{-C_1a\frac{p(s-a)}{s-p+1}- \frac{p(s-a)}{s-p+1}+s_p+C_0\frac{a}{s-p+1}}(\mathrm{log}(2nR))^{k}e^{-\frac{a\theta}{s-p+1}(\mathrm{log}(2nR))^{\tau_p}},
               \end{align*}
               where the last equality follows from $R^a=e$. Note that 
               \begin{align*}
                   &-C_1a\frac{p(s-a)}{s-p+1}- \frac{p(s-a)}{s-p+1}+s_p+C_0\frac{a}{s-p+1}
                   \\
                   =&\frac{a}{s-p+1}(-C_1p(s-a)+C_0+p)\leq -\frac{a}{s-p+1},
               \end{align*}
               since $C_1>\frac{C_0+p+2}{ps}$ and $a$ small enough. This yields that 
               \begin{align*}
                   I_{p,2}\leq C n^{-\frac{a}{s-p+1}}(\mathrm{log}(2nR))^{k}.
               \end{align*}
               Similarly, we have
               \begin{align*}
                   I_{q,1}\leq Ca^{\frac{q(s-a)}{s-q+1}-\frac{k+1}{\tau_q}}, \quad  I_{q,2}\leq C n^{-\frac{a}{s-q+1}}(\mathrm{log}(2nR))^{k}.
               \end{align*}
               Combining with (5.1), we find that
               \begin{align*}
                   &\int_MVu^{s-a}\varphi_n^bd\mu 
                  \nonumber\\
                  =&Ca^{-\frac{s(p-1)}{s-p+1}}(a^{\frac{p(s-a)}{s-p+1}-\frac{k+1}{\tau_p}}+n^{-\frac{a}{s-p+1}}(\mathrm{log}(2nR))^{k})
                  \\
                  &+Ca^{-\frac{s(q-1)}{s-q+1}}(a^{\frac{q(s-a)}{s-q+1}-\frac{k+1}{\tau_q}}+n^{-\frac{a}{s-q+1}}(\mathrm{log}(2nR))^{k}).
               \end{align*}
               Letting $n\xrightarrow{}\infty$ and noting that $a^a$ close to 1 for small $a$, we have
               \begin{align*}
                   \int_{B_R}Vu^{s-a}d\mu\leq& \int_MVu^{s-a}\varphi_n^bd\mu
                   \\
                   \leq & Ca^{\frac{s}{s-p+1}-\frac{k+1}{\tau_p}}+Ca^{\frac{s}{s-q+1}-\frac{k+1}{\tau_q}}.
               \end{align*}
               Notice that
               \begin{align*}
                   \frac{s}{s-p+1}-\frac{k+1}{\tau_p}>0,\quad \frac{s}{s-q+1}-\frac{k+1}{\tau_q}>0.
               \end{align*}
               Passing to the limit as $R$ tends to $\infty$, we conclude that
               \begin{align*}
                   \int_MVu^sd\mu=0,
               \end{align*}
               that is, $u=0$ a.e. on $M$.  {\qed}

       \section{ Acknowledgments}
        The author would like to thank Professor Yuxin Dong and Professor Xiaohua Zhu for their continued support and encouragement.

\bibliographystyle{siam}
\bibliography{ref}

~\\
  Biqiang Zhao
  \\
  $Beijing\ International\ Center\ for$
  \\
  $Mathematical\ Research $
\\
  $ Peking\ University$
\\
   $Beijing\ 100871 ,$ $P.R.\ China $

\end{document}